\documentclass[11pt]{amsart}
\usepackage[utf8]{inputenc}
\usepackage{fontenc}
\usepackage{amsfonts}
\usepackage{amssymb}
\usepackage{amsmath}
\usepackage{amsthm}\usepackage{mathtools}
\usepackage{enumerate}
\usepackage{hyperref}\usepackage{enumitem}
\usepackage{mathrsfs}
\usepackage{tikz}
\usepackage{marginnote}
\usepackage{xcolor}
\usepackage{soul}

\usepackage[all,cmtip]{xy}




\newcommand{\N}{\mathbb{N}}


\newtheorem*{rigprob*}{Rigidity Problem for Uniform Roe Algebras}
\newtheorem*{rigprobcorona*}{Rigidity Problem for Uniform Roe Coronas}







\newtheorem{theorem}{Theorem}[section]
\newtheorem*{theorem*}{Theorem}
\newtheorem{proposition}[theorem]{Proposition}

\newtheorem*{proposition*}{Proposition}
\newtheorem{lemma}[theorem]{Lemma}
\newtheorem*{lemma*}{Lemma}
\newtheorem{corollary}[theorem]{Corollary}
\newtheorem*{corollary*}{Corollar}

\newtheorem*{fact*}{Fact}
\theoremstyle{definition}
\newtheorem{definition}[theorem]{Definition}
\newtheorem*{definition*}{Definition}

\newtheorem*{claim*}{Claim}

\newtheorem*{conjecture*}{Conjecture}

\theoremstyle{remark}
\newtheorem{example}{Example}
\newtheorem*{example*}{Example}
\newtheorem{remark}[theorem]{Remark}
\newtheorem*{remark*}{Remark}

\newtheorem*{note*}{Note}
\newtheorem*{question*}{Question}



\DeclareMathOperator{\Span}{span}

\DeclareMathOperator{\supp}{supp}

\newcounter{my_enumerate_counter}
\newcommand{\pushcounter}{\setcounter{my_enumerate_counter}{\value{enumi}}}
\newcommand{\popcounter}{\setcounter{enumi}{\value{my_enumerate_counter}}}

\usepackage{enumitem}

\begin{document}

\title[Shrinking Schauder Frames and their Associated Spaces]{Shrinking Schauder Frames and their Associated Bases}%

\author{Kevin Beanland}
\address[K. Beanland]{Department of Mathematics, Washington \& Lee University, 204 W. Washington St. Lexington, VA, 24450}
\email{beanlandk@wlu.edu}
\urladdr{kbeanland.academic.wlu.edu}

\author{Daniel Freeman}
\address[D. Freeman] {Department of Mathematics and Statistics, St Louis University, St Louis, MO 		}
\email{daniel.freeman@slu.edu}
\urladdr{mathstat.slu.edu/~freeman/}

\subjclass[2020]{46B15, 42C15, 46B10}
\keywords{frames, Schauder frames, shrinking bases, bounded approximation property}
\thanks{The first author was supported by a Lenfest Summer Research Grant through Washington \& Lee University. The second author was supported by grant 706481 from the Simons Foundation.}
\date{\today}%
\maketitle

\begin{abstract}
    For a Banach space $X$ with a shrinking Schauder frame $(x_i,f_i)$ we provide an explicit method for constructing a shrinking associated basis.  In the case that the minimal associated basis is not shrinking, we prove that every shrinking associated basis of $(x_i,f_i)$ dominates an uncountable family of incomparable shrinking associated bases of $(x_i,f_i)$. By adapting a construction of Pe{\l}czy{\'n}ski, we characterize spaces with shrinking Schauder frames as space having the $w^*$-bounded approximation property. 
    
\end{abstract}

\setcounter{tocdepth}{1}

\section{Introduction}

A frame for a separable infinite dimensional Hilbert space $H$ is a sequence of vectors $(x_j)_{j=1}^\infty$ in $H$ such that there exists constants $0<A\leq B$ so that $A\|x\|^2\leq\sum |\langle x, x_j\rangle|^2\leq B\|x\|^2$ for all $x\in H$.  If $(x_j)_{j=1}^\infty$ is a frame of $H$ then there exists a possibly different frame $(f_j)_{j=1}^\infty$ of $H$ called a {\em dual frame} such that 
\begin{equation}\label{E:frame}
    x=\sum_{j=1}^\infty \langle f_j,x\rangle x_j\hspace{1cm}\textrm{ for all }x\in H.
\end{equation}
That is, frames can be used  like a basis to give a linear reconstruction formula for vectors in $H$.  The difference between frames and bases is that a frame allows for redundancy in that the coefficients given for reconstruction in \eqref{E:frame} are not required to be unique.  

Frames have been generalized to Banach spaces in various ways such as atomic decompositions \cite{FG-atomic,G-atomic}, framings \cite{CHL}, and Schauder frames \cite{CCS-JMAA,CDOS}.  We will focus on Schauder frames in this paper which are a direct generalization of the reconstruction formula given in \eqref{E:frame}. 
Let $X$ be a separable infinite dimensional Banach space. A sequence of pairs $(x_j,f_j)_{j=1}^\infty$ in $X \times X^*$ is called a {\em Schauder frame} for $X$ if
\begin{equation}
        x=\sum_{j=1}^\infty f_j(x) x_j\hspace{1cm}\textrm{ for all }x\in X.
\end{equation}
 We make the convention that $x_j\not=0$ for $j \in \mathbb{N}$.   Though Schauder frames were not explicitly defined until 2008, the first appearance of a Schauder frame, without the name Schauder frame, is in A. Pe{\l}czy{\'n}ski's proof \cite{Pel-BAP} that every space with the bounded approximation property (BAP) is isomorphic to complemented subspace of a space with a basis\footnote{In the same year (1971) Johnson, Rosenthal, and Zippin \cite{JRZ-Israel} proved the same result with a completely different method.}. In 1987, S. Szarek showed that spaces with the BAP but without bases exist \cite{Szarek-Acta}. 
Indeed, Pe{\l}czy{\'n}ski showed that a separable Banach space $X$ has the BAP if and only if it has a Schauder frame (in the above sense) and that, furthermore, there is a space $Z$ with a basis so that the identity on $X$ factors through the identity on $Z$ in a natural way. Formally, if $(x_i,f_i)_{i=1}^\infty$ is a Schauder frame for $X$ and $Z$ is a Banach space with a Schauder basis $(z_i)_{i=1}^\infty$ then $Z$ is an {\em associated space} for $(x_i,f_i)_{i=1}^\infty$ (and $(z_i)_{i=1}^\infty$ is an {\em associated basis} for the frame $(x_i,f_i)_{i=1}^\infty$) if the maps $T:X \to Z$ (analysis operator) and $S:Z\to X$ (synthesis operator) defined by $T(\sum_{i=1}^\infty f_i(x)x_i)=\sum_{i=1}^\infty f_i(x)z_i$ and $S(\sum_{i=1}^\infty a_i z_i)= \sum_{i=1}^\infty a_i x_i$ are bounded.  

In the case that both $(x_i)_{i=1}^\infty$ and $(f_i)_{i=1}^\infty$ are frames for a Hilbert space $H$, then the associated space $Z$ can be chosen to be $\ell_2$ and the associated basis $(z_i)_{i=1}^\infty$  can be chosen to be the unit vector basis for $\ell_2$.  This is of fundamental importance in frame theory as well as in applications such as signal processing.  Indeed, given some vector $x\in H$, the analysis operator maps $x$ to $T(x)=(\langle f_i,x\rangle)_{i=1}^\infty\in\ell_2$.  One can then apply filters to the sequence of frame coefficients 
$(\langle f_i,x\rangle)_{i=1}^\infty$ to obtain a sequence $(b_i)_{i=1}^\infty\in\ell_2$.  Applying the synthesis operator then gives a vector $S((b_i)_{i=1}^\infty)=\sum_{i=1}^\infty b_i x_i$ which is an approximation of $x$ but is improved in some way such as being compressed or having noise or artifacts removed.

If one wishes to use similar techniques for a Schauder frame $(x_i,f_i)_{i=1}^\infty$ then it is advantageous to construct the associated basis $(z_i)_{i=1}^\infty$ to be as nice as possible.  That is, if $(x_i,f_i)_{i=1}^\infty$ has some desirable property such as being unconditional, shrinking, or boundedly complete then one would like $(z_i)_{i=1}^\infty$ to share the property as well.  If $(x_i,f_i)_{i=1}^\infty$ is unconditional, then it is straightforward to construct an unconditional associated basis.  In \cite{BFL-Fund}, the authors of the current paper and R. Liu prove that if $(x_i,f_i)_{i=1}^\infty$ is shrinking  then it has a shrinking associated basis.  However, the construction in \cite{BFL-Fund} is relatively difficult and involves the method of  bounds on branches of weakly null trees developed by E. Odell and Th. Schlumprecht \cite{OS-trees} \cite{FrOScZ-Fund}.  One of the main goals of this paper is to give  a more direct and much simpler construction of a shrinking associated basis, which we state in the following theorem.

\begin{theorem}\label{T:intro}
Let $(x_i,f_i)_{i=1}^\infty$ be a shrinking Schauder frame for a Banach space $X$.  For each $m\leqslant n$, we denote $P_{[m,n]}:X\rightarrow X$ to be the operator $P_{[m,n]}(x)=\sum_{i=m}^n f_i(x) x_i$.  Then there exists an increasing sequence of natural numbers $(N_k)_{k=1}^\infty$ such that
\begin{equation}\label{E:introT}
\sup_{\substack{m_0<n_0 \leqslant k \\ N_k \leqslant m \leqslant n}}\|P_{[m_0,n_0]} P_{[m,n]} x\|\leqslant 2^{-k}\|x\|\hspace{1cm}\textrm{ for all }x\in X.
\end{equation}
Furthermore,  if $(N_k)_{k=1}^\infty$ satisfies \eqref{E:introT} then $Z_{(N_k)}$ is an associated space of $(x_i,f_i)_{i=1}^\infty$ and  $(z_i)_{i=1}^\infty$ is a shrinking associated basis  where for $\sum a_i z_i\in Z_{(N_k)}$, the norm is given by
    \begin{equation}\label{E:intro}
            \Big\|\sum a_i z_i\Big\|_{(N_k)}=\sup_{m\leqslant n}\Big\|\sum_{m\leqslant i\leqslant n} a_i x_i\Big\| \vee 
    \sup_{\substack{m_0\leqslant n_0\leq k\\ N_k \leqslant m\leqslant n}} 2^k \Big\|P_{[m_0,n_0]}  \sum_{m\leqslant i\leqslant n} a_i x_i\Big\|.
        \end{equation}

\end{theorem}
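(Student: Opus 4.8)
The plan is to separate the two assertions: first the (soft) existence of a sequence $(N_k)$ satisfying \eqref{E:introT}, and then the three properties of $Z_{(N_k)}$ — that $(z_i)$ is a basis, that $Z_{(N_k)}$ is an associated space, and that the basis is shrinking — the last being the real content.

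\textit{Existence of $(N_k)$.} Fix $k$. There are only finitely many pairs with $m_0 < n_0 \le k$, and for each the operator $P_{[m_0,n_0]} = \sum_{i=m_0}^{n_0} f_i(\cdot)x_i$ is a fixed finite-rank operator. Since the frame is shrinking, $\|f_i \circ P_{[m,\infty)}\| \to 0$ as $m \to \infty$ for each fixed $i$, and hence $\sup_{n \ge m}\|f_i \circ P_{[m,n]}\| \to 0$; summing over the finitely many indices $m_0 \le i \le n_0$ and maximizing over the finitely many pairs, I can choose $N_k$ (arranging $N_1 < N_2 < \cdots$) so that \eqref{E:introT} holds. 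The same argument in fact yields the estimate with $m_0 \le n_0$; alternatively the diagonal terms $m_0 = n_0$ that the norm \eqref{E:intro} also uses are recovered from \eqref{E:introT} by telescoping $P_{[m_0,m_0]} = P_{[m_0,n_0]} - P_{[m_0+1,n_0]}$, at the cost of a harmless constant factor.

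\textit{$(z_i)$ is an associated basis.} Both terms defining $\|\cdot\|_{(N_k)}$ are suprema of norms of images of coordinate-interval restrictions, so $\|\cdot\|_{(N_k)}$ is a seminorm for which restricting the support of $\sum a_i z_i$ to an interval is norm-nonincreasing; since the first term already dominates $|a_i|\,\|x_i\|$ for each $i$, it is a norm and $(z_i)$ is a bimonotone basis of the completion $Z_{(N_k)}$. Boundedness of $S$ is immediate: taking the full interval in the first term gives $\|\sum a_i x_i\| \le \|\sum a_i z_i\|_{(N_k)}$, so $\|S\| \le 1$, and $ST = \mathrm{id}_X$. For $T$, the first term of $\|T(x)\|_{(N_k)}$ equals $\sup_{m \le n}\|P_{[m,n]}x\| \le K\|x\|$, where $K = \sup_{m\le n}\|P_{[m,n]}\| < \infty$ is the frame constant, while the second term, since $\sum_{m\le i\le n} f_i(x)x_i = P_{[m,n]}x$, equals $\sup_k \sup_{m_0 \le n_0 \le k,\, N_k \le m \le n} 2^k\|P_{[m_0,n_0]} P_{[m,n]} x\|$, which by \eqref{E:introT} (and the telescoping remark) is at most $2\|x\|$. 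Hence $T$ is bounded and $Z_{(N_k)}$ is an associated space.

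\textit{Shrinking (the crux).} By the standard characterization, it suffices to show that every normalized block sequence $(u_j)$ of $(z_i)$, with $u_j$ supported on an interval $I_j$ and $\min I_j \to \infty$, is weakly null. Write $v_j = S u_j$, so $\|v_j\| \le 1$. The key computation isolates the effect of the second term: for each fixed $m_0 \le n_0$ I claim $\|P_{[m_0,n_0]} v_j\| \to 0$. Indeed, were $\|P_{[m_0,n_0]} v_j\| \ge c > 0$ along a subsequence, choose $k(j)$ to be the largest $k$ with $N_k \le \min I_j$, so $k(j) \to \infty$; feeding $[m,n] = [\min I_j, \max I_j]$ (for which $\sum_{m \le i \le n} a_i x_i = v_j$) into the second term of \eqref{E:intro} with $k = k(j)$ gives $1 = \|u_j\|_{(N_k)} \ge 2^{k(j)}\|P_{[m_0,n_0]} v_j\| \ge 2^{k(j)} c \to \infty$, a contradiction. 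Combining $\|P_{[1,n_0]} v_j\| \to 0$ with the shrinking of the frame, $\|g \circ P_{[n_0+1,\infty)}\| \to 0$, and splitting $g(v_j) = g(P_{[1,n_0]} v_j) + g(P_{[n_0+1,\infty)} v_j)$, I obtain $g(v_j) \to 0$ for every $g \in X^*$, i.e. $v_j \rightharpoonup 0$ in $X$.

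\textit{The main obstacle} is upgrading $v_j \rightharpoonup 0$ to $u_j \rightharpoonup 0$ in $Z_{(N_k)}$, since $S$ is not injective and $Z_{(N_k)}^*$ carries functionals beyond the pullbacks $S^*(X^*)$. Decomposing $\phi = S^*(\phi \circ T) + \phi \circ (\mathrm{id} - TS)$ for $\phi \in Z_{(N_k)}^*$, the first summand gives $(\phi \circ T)(v_j) \to 0$ by the above; the remaining functionals vanish on $T(X)$ and detect only the redundancy component $u_j - T v_j \in \ker S$ of the block. I would control this component using the explicit second term of \eqref{E:intro} once more, together with the estimates $\|P_{[1,n_0]} v_j\| \to 0$ and the coordinatewise convergence $z_i^*(u_j - T v_j) = a^{(j)}_i - f_i(v_j) \to 0$, to argue that the $\ker S$-part is itself weakly null; this transfer is the step I expect to require the most care. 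Granting it, $(u_j)$ is weakly null, hence $(z_i)$ is shrinking.
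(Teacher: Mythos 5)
Your treatment of the existence of $(N_k)$ and of the boundedness of $S$ and $T$ matches the paper's and is fine, as is your observation that $v_j = Su_j$ is weakly null in $X$ (via $\|P_{[m_0,n_0]}v_j\|\to 0$ forced by the $2^k$ weights, plus shrinkingness of the frame). But the proof has a genuine gap exactly at the step you flag yourself: upgrading $Su_j\rightharpoonup 0$ in $X$ to $u_j\rightharpoonup 0$ in $Z_{(N_k)}$. This transfer is the entire content of the theorem, not a technicality to be "granted." The danger is that $\ker S$ (equivalently, the range of $I-TS$) could contain a normalized block sequence that is $\ell_1^+$ and hence not weakly null; this is precisely what happens for the minimal associated basis in the paper's Example, where $(z_{2i-1})$ is a block sequence equivalent to the summing basis of $c_0$ even though the frame is shrinking. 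Your proposed decomposition $\phi=\phi\circ TS+\phi\circ(I-TS)$ correctly isolates the problem, but the claim that the redundancy component $u_j-Tv_j$ is weakly null does not follow from coordinatewise convergence together with $\|P_{[1,n_0]}v_j\|\to 0$; coordinatewise convergence to $0$ never rules out summing-basis behaviour.

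What actually closes the gap in the paper is a quantitative estimate you never derive: after passing to a subsequence $(y_i)$ supported, relative to the $(N_{k_i})$, so that $\|P_{[m_0,n_0]}S_{[m,n]}y_i\|\leqslant 2^{-k_i}$ for $m_0\leqslant n_0\leqslant k_i$ and $\|P_{[m,n]}Sy_i\|\leqslant 2^{-k_i}$ for $m\geqslant k_{i+1}$, one shows that the weighted component of the norm satisfies $\|\sum a_iy_i\|_{k}\leqslant 2\sup_i|a_i|$ for every $k$. This estimate feeds a dichotomy: if $\|Sy_i\|\to 0$ along a subsequence, the full norm of $\sum a_iy_i$ is controlled by $\sup|a_i|$, so the block is equivalent to the $c_0$ basis and hence weakly null; if $\|Sy_i\|\geqslant c>0$, the same estimate shows the weighted part of the norm is dominated by the $Z_{\min}$ part, whence $(y_i)$ is equivalent to $(Sy_i)$, and weak nullity is pulled back from $X$. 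Your outline contains neither the estimate nor the case split, so the argument as written does not establish that $(z_i)$ is shrinking.
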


Given a Schuader frame $(x_i,f_i)_{i=1}^\infty$ with $x_i\neq 0$ for all $i\in\N$, the most natural associated space  is now referred to as the {\em minimal associated space} \cite{CDOS}\cite{Liu-Frames} and is defined as follows. Denote by $(z_i)_{i=1}^\infty$  the unit vector basis for $c_{00}$ and for $(a_i)\in c_{00}$ consider the norm
\begin{equation}\label{E:min}
\Big\|\sum a_i z_i\Big\|_{\min}=\sup_{m<n}\Big\|\sum_{m\leqslant i \leqslant n} a_i x_i\Big\|. 
\end{equation}
The minimal associated space $Z_{min}$ is defined to be the completion of $c_{00}$ under the above norm and the basis $(z_i)_{i=1}^\infty$ is called the {\em minimal associated basis}.
A Schauder frame $(x_i,f_i)_{i=1}^\infty$ may have many non-equivalent associated bases.  However, 
the basis $(z_i)_{i=1}^\infty$ defined in \eqref{E:min} is  minimal in the sense that if $(y_i)_{i=1}^\infty$ is any associated basis for $(x_i,f_i)_{i=1}^\infty$ then $(y_i)_{i=1}^\infty$ dominates $(z_i)_{i=1}^\infty$. That is, there exists a constant $K>0$ so that $\|\sum_{i} a_i z_i\|\leq K\|\sum_{i} a_i y_i\|$ for all $(a_i)\in c_{00}$ \cite{Liu-Frames}.  

We now consider the problem of determining if a shrinking Schauder frame has a minimal shrinking associated basis.  That is, if $(x_i,f_i)_{i=1}^\infty$ is a shrinking Schauder frame, then when does there exist a shrinking associated basis $(w_i)_{i=1}^\infty$ of $(x_i,f_i)_{i=1}^\infty$ such that if $(y_i)_{i=1}^\infty$ is any shrinking associated basis of $(x_i,f_i)_{i=1}^\infty$ then $(y_i)_{i=1}^\infty$ dominates $(w_i)_{i=1}^\infty$?  In Section \ref{S:compare} we prove that a Schauder frame has a minimal shrinking associated basis if and only if the minimal associated basis defined in \eqref{E:min} is shrinking. Our construction of a shrinking associated basis in Theorem \ref{T:intro} is defined solely in terms of the shrinking Schauder frame $(x_i,f_i)_{i=1}^\infty$ and some sequence of natural numbers $(N_k)_{k=1}^\infty\in[\N]^\omega$.  In Section \ref{S:compare} we prove that if  $(x_i,f_i)_{i=1}^\infty$ is a shrinking Schauder frame  and $(y_i)_{i=1}^\infty$ is any shrinking associated basis then there exists $(N_k)_{k=1}^\infty\in[\N]^\omega$ such that the resulting shrinking associated basis $(z_i)_{i=1}^\infty$ from our construction is dominated by $(y_i)_{i=1}^\infty$.  In other words,  Theorem \ref{T:intro} produces a set of shrinking associated bases such that every shrinking associated basis of $(x_i,f_i)_{i=1}^\infty$  dominates some basis in that set.  Furthermore, we prove that if the minimal associated basis is not shrinking then for every shrinking associated basis $(y_i)_{i=1}^\infty$  there exists uncountably many mutually incomparable shrinking associated bases which are all dominated by $(y_i)_{i=1}^\infty$.  Hence, except for the trivial case where the minimal associated basis is shrinking, we have that the collection of shrinking associated bases will have a very rich lattice structure under the domination partial order.

In the final section we make some observations about how this work relates to now classical results about the BAP and give an alternative proof of the theorem of Johnson, Rosenthal, and Zippin that for a Banach space $X$ with separable dual, $X^*$ has the BAP if and only if $X$ is isomorphic to a complemented subspace of a Banach space with a shrinking basis.

\section{Shrinking Schauder bases and shrinking Schauder  frames}

A sequence of vectors $(x_i)_{i=1}^\infty$ in a separable Banach space $X$ is called a {\em Schauder basis } if for all $x\in X$ there exists a unique sequence of scalars $(a_i)_{i=1}^\infty$ such that $x=\sum_{i=1}^\infty a_i x_i$.  If $X$ is a Banach space with dual $X^*$ then 
a Schauder basis $(x_i)_{i=1}^\infty$ is called {\em shrinking} if  the biorthogonal functionals $(x_i^*)_{i=1}^\infty$ form a Schauder basis for $X^*$. In particular, a Banach space with a shrinking basis necessarily has a separable dual with a basis. There are, however, Banach spaces with bases whose duals are separable but fail the approximation property \cite[Theorem 1.e.7.(b)]{LTz-book}. Naturally, a Schauder frame $(x_i,f_i)_{i=1}^\infty$ for $X$ is shrinking if and only if $(f_i,x_i)_{i=1}^\infty$ is a Schauder frame for $X^*$. 
Using the terminology atomic decomposition instead of Schauder frame, Carando and Lassalle \cite{CL-Studia} give the following useful characterization of shrinking Schauder frames which is analogous to James' well-known characterization for Schauder bases. 

\begin{theorem}[\cite{CL-Studia}, Theorem 1.4]
Let $(x_i,f_i)_{i=1}^\infty$ be a Schauder frame for $X$. For each interval $I\subseteq\N$, let $P_I:X\rightarrow X$ be the operator $P_I(x)=\sum_{i\in I} f_i(x) x_i$. Then $(f_i,x_i)_{i=1}^\infty$ is a Schauder frame for $X^*$ if and only if for each $f \in X^*$ we have that
\begin{equation}
    \lim_{n \to \infty}\|f\circ P_{[n,\infty)}\|=0.
    \label{reconstruct}
\end{equation}
\end{theorem}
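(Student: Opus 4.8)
The plan is to prove both implications at once through a single chain of equivalences obtained by passing to adjoint operators. First I would record the basic computation underlying everything. Since $P_{[1,n]}(x) = \sum_{i=1}^n f_i(x) x_i$, the adjoint $P_{[1,n]}^* \colon X^* \to X^*$ satisfies, for every $f \in X^*$ and $x \in X$,
$$(P_{[1,n]}^* f)(x) = f(P_{[1,n]} x) = \sum_{i=1}^n f_i(x) f(x_i) = \Big(\sum_{i=1}^n f(x_i) f_i\Big)(x),$$
so that $P_{[1,n]}^* f = \sum_{i=1}^n f(x_i) f_i$. Thus the $n$-th partial sum of the candidate reconstruction series $\sum_i f(x_i) f_i$ for $X^*$ is exactly $P_{[1,n]}^* f$, and by definition $(f_i, x_i)_{i=1}^\infty$ is a Schauder frame for $X^*$ precisely when $P_{[1,n]}^* f \to f$ in norm for every $f \in X^*$.

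Next I would observe that the weak* convergence is automatic. Because $(x_i, f_i)$ is a Schauder frame, $P_{[1,n]} x \to x$ for every $x \in X$, hence $(P_{[1,n]}^* f)(x) = f(P_{[1,n]} x) \to f(x)$ for every $x$; that is, $P_{[1,n]}^* f \to f$ in the weak* topology regardless. Consequently the only content of the assertion that $(f_i,x_i)$ is a Schauder frame for $X^*$ is the upgrade from weak* to norm convergence, and since a norm limit must agree with the weak* limit, no separate identification of the limit vector is required.

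Then I would translate this into the tail condition. From the exact operator identity $P_{[1,n]} + P_{[n+1,\infty)} = \Id$ on $X$ I obtain $P_{[n+1,\infty)}^* = \Id - P_{[1,n]}^*$ on $X^*$, so
$$f \circ P_{[n+1,\infty)} = P_{[n+1,\infty)}^* f = f - \sum_{i=1}^n f(x_i) f_i.$$
Taking norms and reindexing $m = n+1$, the displayed condition $\lim_{m\to\infty}\|f \circ P_{[m,\infty)}\| = 0$ in \eqref{reconstruct} is literally the statement that $\sum_{i=1}^n f(x_i) f_i \to f$ in norm, which by the first step is equivalent to $(f_i, x_i)$ being a Schauder frame for $X^*$. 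This settles both directions simultaneously.

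Finally, for completeness I would justify the uniform boundedness implicit in calling the resulting system a genuine Schauder frame: applying the Banach--Steinhaus theorem to the pointwise convergent operators $P_{[1,n]}$ gives $\sup_n \|P_{[1,n]}\| < \infty$, and since $\|P_{[1,n]}^*\| = \|P_{[1,n]}\|$ the partial-sum operators on $X^*$ are uniformly bounded as well. The only genuine point requiring care---and the closest thing to an obstacle---is the adjoint bookkeeping: one must resist expecting $P_{[1,n]}^* \to \Id$ in the strong operator topology of $X^*$, since in general it holds only weak*, and it is exactly the success or failure of the pointwise \emph{norm} convergence $P_{[1,n]}^* f \to f$, captured by \eqref{reconstruct}, that distinguishes shrinking from non-shrinking frames.
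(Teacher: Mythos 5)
Your proposal is correct and rests on the same basic computation as the paper's proof, namely the identity $f\circ P_I=\sum_{i\in I}f(x_i)f_i$ as functionals on $X$. If anything your version is slightly tighter: by using the exact tail identity $f\circ P_{[n+1,\infty)}=f-\sum_{i=1}^{n}f(x_i)f_i$ you get both implications and the identification of the limit in one stroke, whereas the paper argues via the Cauchy criterion on the blocks $P_{[m,n]}$ and leaves the identification of the sum with $f$ (via weak* convergence) implicit.
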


We sketch a short proof for completeness.

\begin{proof}
Consider the reverse direction and assume that \eqref{reconstruct} holds. Let $f\in X^*$.  It suffices to show that  $\sum f(x_i)f_i$ is a Cauchy sequence. This follows readily from (\ref{reconstruct}) as
\begin{align*}
            \limsup_{m,n\to \infty} \Big\|\sum_{i=m}^n f(x_i)f_i\Big\| & = \limsup_{m,n\to \infty} \sup_{x \in S_X} \sum_{i=m}^n f(x_i)f_i(x)\\
                & = \limsup_{m,n\to \infty} \sup_{x \in S_X} f\Big(\sum_{i=m}^n f_i(x)x_i\Big) \\
                & = \limsup_{m,n \to \infty} \|f\circ P_{[m,n]}\|=0.
\end{align*}

Therefore $(f_i,x_i)_{i=1}^\infty$ is a Schauder frame for $X^*$. A similar proof shows that the converse holds. 
\end{proof}

Let $X$ be a Banach space with a Schauder frame $(x_i,f_i)_{i=1}^\infty$ and let $Z$ be a Banach space with a Schauder basis $(z_i)_{i=1}^\infty$.  Recall that $Z$ is said to be an {\em associated space} of the Schauder frame $(x_i,f_i)_{i=1}^\infty$ and $(z_i)_{i=1}^\infty$ is said to an {\em associated basis} if the maps $T:X\rightarrow Z$ and $S:Z\rightarrow X$ are bounded where $T(x)=\sum_{i=1}^\infty f_i(x) x_i$ for all $x\in X$ and $S(\sum_{i=1}^\infty a_i z_i)=\sum_{i=1}^\infty a_i x_i$ for all $\sum_{i=1}^\infty a_i z_i\in Z$.  It follows immediately that if a Schauder frame  has a shrinking associated basis then the Schauder frame must be shrinking.  In \cite{BFL-Fund}, the authors of the current paper and R. Liu prove a more general and technical theorem which implies that every shrinking Schauder frame has a shrinking associated basis.   Unfortunately, the argument does not provide an explicit construction of the associated basis and the proofs are relatively difficult. In Section \ref{S:main} we  give an explicit method which will give a shrinking associated basis for any shrinking Schauder frame.  Before proceeding we show that  the minimal associated basis for a shrinking Schauder frame need not be shrinking.

\begin{example}\label{example}
Let $(e_i)$ be the unit vector basis for $\ell_2$. Let $x_1=e_1$ and $f_1=e_1^*$.  For all $i\in\N$ we let $x_{2i+1}=e_1$,  $x_{2i}=e_{i+1}$, $f_{2i+1}=0$, and $f_{2i}=e^*_{i+1}$. Then $(x_i,f_i)_{i=1}^\infty$ is a shrinking Schauder frame for $\ell_2$ but the minimal associated space for $(x_i,f_i)_{i=1}^\infty$ has the norm
\begin{equation}
    \big\|\sum a_i z_i\big\|_{\min}^2= \sup\bigg\{\big|\sum_{\substack{i\in I\\ i\text{ odd }}} a_{i}\big|^2+ \sum_{\substack{i\in I\\ i\text{ even }}} |a_i|^2 : I \subset \mathbb{N}, \text{ is an interval}\bigg\}.
\end{equation}
The basis $(z_i)$ is not shrinking since the sequence of odd vectors $(z_{2i-1})_{i\in \mathbb{N}}$ is equivalent to the summing basis of $c_0$.
\end{example}

The above example, although simple, is rather instructive in that it reveals that redundancy in a frame can make the minimal associated basis not shrinking.  

A sequence $(y_i)$ in a Banach space is called $\alpha$-$\ell_1^+$ for some $\alpha>0$ if $\|\sum a_i y_i\|\geqslant \alpha\sum a_i$ whenever $(a_i)$ is a summable sequence of non-negative scalars.  We conclude this section by recalling the following useful and well known characterization of shrinking bases.  

\begin{lemma}\label{L:w}
Let $X$ be a Banach spaces with a Schauder basis $(x_i)$. The following are equivalent:
\begin{enumerate}
    \item $(x_i)$ is not shrinking.
    \item There is a normalized block sequence $(y_i)$ of $(x_i)$ that is not weakly null. 
    \item There is a normalized block  sequence $(y_i)$ of $(x_i)$ that is  $\alpha$-$\ell_1^{+}$ for some $\alpha>0$.
\end{enumerate}
\end{lemma}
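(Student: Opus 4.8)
The plan is to prove the three-way equivalence by establishing the cycle $(1)\Rightarrow(2)\Rightarrow(3)\Rightarrow(1)$, where the genuinely new content lives in the implication $(2)\Rightarrow(3)$; the other two are more or less standard facts about shrinking bases that I would include for completeness.

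For $(1)\Rightarrow(2)$ I would use the characterization that a basis $(x_i)$ is shrinking if and only if every normalized block sequence is weakly null. Contrapositively, if $(x_i)$ is not shrinking, there is some $f\in X^*$ with $\limsup_n \|f\circ P_{[n,\infty)}\|>0$ (using the James-type criterion analogous to \eqref{reconstruct}), so I can extract a normalized block sequence $(y_i)$ and a single functional witnessing $\inf_i f(y_i)>0$, which prevents $(y_i)$ from being weakly null. The direction $(3)\Rightarrow(1)$ is immediate: an $\alpha$-$\ell_1^+$ block sequence cannot be weakly null (the constant functionals $a_i=1$ give lower bounds incompatible with weak convergence to $0$), and any basis all of whose normalized block sequences are weakly null is shrinking, so the existence of such a block sequence forces $(x_i)$ to fail shrinking.

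The heart is $(2)\Rightarrow(3)$: given a normalized block sequence $(y_i)$ that is not weakly null, I want to pass to a subblock that is $\alpha$-$\ell_1^+$. First I would fix $f\in S_{X^*}$ and a subsequence along which $f(y_i)\to\delta$ for some $\delta>0$ (possible since $(y_i)$ is not weakly null, after relabeling and replacing $f$ by $-f$ if necessary). The obstruction is that $f(y_i)\to\delta$ only controls nonnegative combinations \emph{asymptotically}, and the tail-small contributions must be summed carefully. The clean fix is to select a further subsequence $(y_{i_k})$ with $f(y_{i_k})\geqslant \delta/2$ for all $k$; then for any summable sequence $(a_k)$ of nonnegative scalars, $\|\sum_k a_k y_{i_k}\|\geqslant f(\sum_k a_k y_{i_k})=\sum_k a_k f(y_{i_k})\geqslant (\delta/2)\sum_k a_k$, which is exactly the $\alpha$-$\ell_1^+$ condition with $\alpha=\delta/2$. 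Since a subblock of a block sequence of $(x_i)$ is again a (normalized) block sequence of $(x_i)$, this produces the desired witness.

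I expect the only real subtlety to be bookkeeping in $(2)\Rightarrow(3)$, namely ensuring the witnessing functional can be chosen with a uniform lower bound $f(y_{i_k})\geqslant\delta/2$ rather than merely a positive limit; passing to the subsequence handles this cleanly, so no heavy machinery is needed. Everything else reduces to the standard block-sequence description of shrinking bases, which I would either cite or recall in a sentence. The whole argument is elementary and self-contained once the weak-nullity characterization is in hand.
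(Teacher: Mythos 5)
Your argument is correct, and in fact the paper offers no proof to compare against: Lemma~\ref{L:w} is stated there as a recalled, well-known fact with no argument supplied. Your cycle $(1)\Rightarrow(2)\Rightarrow(3)\Rightarrow(1)$ is the standard one, and the key step $(2)\Rightarrow(3)$ is handled exactly right: extract a subsequence on which a fixed norm-one functional $f$ satisfies $f(y_{i_k})\geqslant \delta/2$ (after passing to a constant sign and replacing $f$ by $-f$ if needed), and then the $\alpha$-$\ell_1^+$ estimate with $\alpha=\delta/2$ is immediate for nonnegative summable coefficients. The only imprecision is your parenthetical justification in $(3)\Rightarrow(1)$ that ``the constant functionals $a_i=1$ give lower bounds incompatible with weak convergence to $0$'': taking all coefficients equal to $1$ over an initial segment gives $\bigl\|\sum_{i=1}^n y_i\bigr\|\geqslant \alpha n$, which by itself does not contradict weak nullity. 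The clean argument is via Mazur's theorem: if $(y_i)$ were weakly null, some convex combinations of tails would converge to $0$ in norm, whereas the $\alpha$-$\ell_1^+$ condition forces every convex combination with nonnegative coefficients to have norm at least $\alpha$. With that one sentence repaired, the proof is complete and self-contained.
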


\section{Constructing a shrinking associated basis}\label{S:main}

We start by setting some notation.  Let $(x_i,f_i)_{i=1}^\infty$ be a shrinking Schauder frame for a Banach space $X$. Let $Z$ be an associated space and $(z_i)_{i=1}^\infty$ be an associated basis.  The {\em analysis operator} is the map $T:X\rightarrow Z$ given by $T(x)=\sum_{i=1}^\infty f_i(x)z_i$ for all $x\in X$.  The {\em synthesis operator} is the map $S:Z\rightarrow X$ given by $S(\sum_{i=1}^\infty a_i z_i)=\sum_{i=1}^\infty a_i x_i$ for all $\sum_{i=1}^\infty a_i z_i\in Z$.  For $m<n$, we use the following notation when we wish to use partial sums.
\begin{enumerate}
    \item $S_{[m,n]}(\sum_i a_i z_i)= \sum_{i=m}^n a_i x_i$ for all $\sum a_iz_i\in Z$, 
    \item $R_{[m,n]}(\sum_i a_i z_i)= \sum_{i=m}^n a_i z_i$ for all $\sum a_iz_i\in Z$, 
\item $P_{[m,n]}(x)= \sum_{i=m}^n f_i(x) x_i$.
\end{enumerate}
It follows from the uniform boundedness principle that $\sup_{m\leq n}\|S_{[m,n]}\|$, $\sup_{m\leq n}\|R_{[m,n]}\|$, and $\sup_{m\leq n}\|P_{[m,n]}\|$ are all finite.  The value $\sup_{m\leq n}\|R_{[m,n]}\|$ is the basis constant of $(z_i)_{i=1}^\infty$ and  the value $\sup_{m\leq n}\|P_{[m,n]}\|$ is called the frame constant of $(x_i,f_i)_{i=1}^\infty$.

The following proposition is contained in \cite{BFL-Fund} and we include the short proof for completeness. 

\begin{proposition}\label{P:N_k}
Let $(x_i,f_i)_{i=1}^\infty$ be a shrinking Schauder frame for a Banach space $X$. Then there is an increasing sequence $(N_k)_{k=1}^\infty$ of natural numbers so that
\begin{equation}
\sup_{\substack{m_0<n_0 \leqslant k \\ N_k <m<n}}\|P_{[m_0,n_0]} P_{[m,n]} x\|\leqslant 2^{-k}\|x\|\hspace{1cm}\textrm{ for all }x\in X.
\label{killstuff}
\end{equation}
\end{proposition}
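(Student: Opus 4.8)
The plan is to reduce the uniform estimate \eqref{killstuff} to finitely many applications of the shrinking hypothesis, exploiting the fact that the constraint $n_0\le k$ confines the range of the outer projection $P_{[m_0,n_0]}$ to the fixed finite block of indices $\{1,\dots,k\}$, independently of $m_0,n_0,m,n$. Concretely, I would first expand the composition: for $x\in X$ one has $P_{[m_0,n_0]}P_{[m,n]}(x)=\sum_{i=m_0}^{n_0}(f_i\circ P_{[m,n]})(x)\,x_i$, so that, writing $L_k:=\sum_{i=1}^k\|x_i\|$, the triangle inequality gives
\[
\|P_{[m_0,n_0]}P_{[m,n]}\|\le L_k\cdot\max_{1\le i\le k}\|f_i\circ P_{[m,n]}\|
\]
whenever $m_0<n_0\le k$. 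This is the crucial structural step: the number of summands and the vectors $x_i$ that occur are all governed by $k$ alone, so the whole operator norm is controlled by the single scalar $\max_{i\le k}\|f_i\circ P_{[m,n]}\|$, which involves only the finitely many functionals $f_1,\dots,f_k$.

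Next I would bound these finitely many quantities using tails. Since $P_{[m,n]}=P_{[m,\infty)}-P_{[n+1,\infty)}$, we get
\[
\|f_i\circ P_{[m,n]}\|\le\|f_i\circ P_{[m,\infty)}\|+\|f_i\circ P_{[n+1,\infty)}\|\le 2\sup_{j>N_k}\|f_i\circ P_{[j,\infty)}\|
\]
as soon as $N_k<m<n$, so that both $m$ and $n+1$ exceed $N_k$. By the characterization of Carando and Lassalle quoted above, the shrinking hypothesis means precisely that $\|f_i\circ P_{[j,\infty)}\|\to 0$ as $j\to\infty$ for each fixed $i$.

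Finally I would choose $N_k$. Because only $f_1,\dots,f_k$ occur, I can select $N_k$ — enlarging it if necessary so that $N_k>N_{k-1}$ and the sequence is increasing — so large that $\sup_{j>N_k}\|f_i\circ P_{[j,\infty)}\|\le 2^{-k}/(2L_k)$ simultaneously for every $i\le k$; this is possible since it is a finite maximum of sequences tending to $0$. Chaining the three displayed estimates then yields $\|P_{[m_0,n_0]}P_{[m,n]}\|\le L_k\cdot 2\cdot 2^{-k}/(2L_k)=2^{-k}$ uniformly over the stated range of indices, which is \eqref{killstuff}.

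I do not anticipate a genuine obstacle here; the entire content lies in the first step, namely the observation that capping $n_0$ at $k$ pins the outer projection to a fixed finite block, so that the merely pointwise (per-functional) decay furnished by shrinkingness can be upgraded to a bound uniform over that block by taking a single finite maximum. The only subtlety is bookkeeping: the decay is quantified one functional at a time, so $N_k$ must be chosen after $k$ is fixed and forced to be increasing by hand.
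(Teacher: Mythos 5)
Your proof is correct and takes essentially the same route as the paper's: both reduce the estimate to the finitely many functionals $f_1,\dots,f_k$ that can appear in the outer projection $P_{[m_0,n_0]}$, and then choose $N_k$ using the shrinking hypothesis in the form $\|f_i\circ P_{[m,n]}\|\to 0$ for each fixed $i\le k$. The only cosmetic difference is that you route this last fact through the tail operators $P_{[j,\infty)}$ and the Carando--Lassalle characterization, whereas the paper invokes directly the norm-convergence of $\sum_i f_j(x_i)f_i$ in $X^*$; these are the same statement.
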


\begin{proof}
Let $k \in \mathbb{N}$ and $\varepsilon>0$. It suffices to show that there is an $N_k >k$ satisfying \begin{equation*}
\sup_{\substack{m_0\leqslant n_0 \leqslant k \\ N_k \leqslant m\leqslant n}}\|P_{[m_0,n_0]} P_{[m,n]} x\|< \varepsilon \hspace{1cm}\textrm{ for all $x\in X$ with $\|x\|=1$.}
\end{equation*}
As $(x_i,f_i)_{i=1}^\infty$ is shrinking we have that $(f_i,x_i)_{i=1}^\infty$ is a Schauder frame for $X^*$.  Thus, for sufficiently large $N_k$ we have that
$$\sup_{N_k \leqslant  m\leqslant n}\|\sum_{i=m}^n f_i(x_j)f_i\|<\frac{\varepsilon}{k\|x_j\|}\hspace{1cm}\textrm{for all  $1\leqslant j \leqslant k$.}$$
This $N_k$ suffices as for fixed $m_0\leqslant n_0 \leqslant k $, $N_k \leqslant m\leqslant n$, and  $x\in X$ with $\|x\|=1$ we have that
\begin{equation*}
    \begin{split}
     \|P_{[m_0,n_0]} P_{[m,n]} x\|  & =   \Big\|\sum_{j=m_0}^{n_0}f_j(\sum_{i=m}^n f_i(x)x_i)x_j\Big\|\\
     & \leqslant  k \sup_{1\leqslant j\leqslant k} \Big\|f_j(\sum_{i=m}^n f_i(x)x_i)x_j\Big\|\\
     & \leqslant k \sup_{1\leqslant j\leqslant k}\Big\|\sum_{i=m}^n f_i(x_j)f_i\Big\|\|x\|\|x_j\| < \varepsilon 
    \end{split}
\end{equation*}
The claim follows.
\end{proof}

\begin{remark}\label{R:sub}
Note that Proposition \ref{P:N_k} still holds if we replace $(2^{-k})_{k=1}^\infty$ with any positive sequence which converges to $0$. Moreover, if $(N_k)_{k=1}^\infty$ satisfies \eqref{killstuff} and $(M_k)_{k=1}^\infty$  is any increasing sequence of natural numbers with $N_k\leqslant M_k$ for all $k\in\N$ then $(M_k)_{k=1}^\infty$ also satisfies \eqref{killstuff}. 
\end{remark}

We now define a norm on $c_{00}$ which we will later prove gives a shrinking associated basis.

\begin{definition}
Let $(x_i,f_i)_{i=1}^\infty$ be a shrinking Schauder frame and let  $(N_k)_{k=1}^\infty$ satisfy the hypothesis of Proposition \ref{P:N_k}. We let $(z_i)_{i=1}^\infty$ denote the unit vector basis of $c_{00}$ and consider the following norm for $\sum a_i z_i\in c_{00}$.
\begin{equation}\label{E:def}
     \Big\|\sum a_i z_i\Big\|_{(N_k)}=\sup_{m\leqslant n}\Big\|\sum_{m\leqslant i\leqslant n} a_i x_i\Big\| \vee 
    \sup_{\substack{m_0\leqslant n_0\leq k\\ N_k \leqslant m\leqslant n}} 2^k \Big\|P_{[m_0,n_0]}  \sum_{m\leqslant i\leqslant n} a_i x_i\Big\|.
\end{equation}
Let $Z_{(N_k)}$ denote the completion of $c_{00}$ under this norm. For $z \in Z_{(N_k)}$ and $k'\in\N$ it will be convenient to denote the second part of \eqref{E:def} as
\begin{equation}
    \|z\|_{k'} := \sup_{\substack{m_0\leqslant n_0\leq k'\\ N_{k'} \leqslant m\leqslant n}} 2^{k'} \|P_{[m_0,n_0]} S_{[m,n]}z\|.
\end{equation}
\end{definition}

Proposition \ref{P:N_k} gives a condition satisfied by each shrinking Schauder frame. The idea behind the definition of the norm above is to force the associated space to satisfy some version of this condition. The goal then is to show that satisfying this condition is sufficient to establish that the associated basis is shrinking.

\begin{remark}
A slight weakening of the norm $\|\cdot\|_{(N_k)}$ was introduced in \cite{BFL-Fund} where the authors prove that the basis $(z_i)$ is strongly shrinking relative to $(x_i,f_i)$, which is a weaker condition than shrinking.  
\end{remark}

\begin{theorem}\label{T:main}
Let $(x_i,f_i)_{i=1}^\infty$ be a shrinking frame for a Banach space $X$ and let $(N_k)_{k=1}^\infty$ satisfy Proposition \ref{P:N_k}. Then $Z_{(N_k)}$ is an associated space for $(x_i,f_i)_{i=1}^\infty$ and $(z_i)_{i=1}^\infty$ is a shrinking basis for the space $Z_{(N_k)}$. 
\label{main}
\end{theorem}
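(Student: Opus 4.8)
The plan is to verify the three assertions in turn: that $S$ and $T$ are bounded (so $Z_{(N_k)}$ is an associated space), that $(z_i)$ is a Schauder basis, and finally that this basis is shrinking, the last being by far the hardest.

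\textbf{That $Z_{(N_k)}$ is an associated space and $(z_i)$ is a basis.} First I would note that the first term of \eqref{E:def} dominates $\|S_{[1,n]}z\|$ for every $n$, so $\|Sz\|\le\|z\|_{(N_k)}$ and $S$ is a contraction. For $T$, the vector $Tx=\sum_i f_i(x)z_i$ has first norm-term $\sup_{m\le n}\|P_{[m,n]}x\|\le C\|x\|$ (the frame constant), while its $k'$-term is $\sup 2^{k'}\|P_{[m_0,n_0]}P_{[m,n]}x\|\le\|x\|$ by the defining property \eqref{killstuff} of $(N_k)$; hence $T$ is bounded. To see that $(z_i)$ is a bimonotone basis I would check that every interval projection $R_{[m,n]}$ is contractive for $\|\cdot\|_{(N_k)}$, which reduces to the identity $S_{[p,q]}R_{[m,n]}=S_{[p,q]\cap[m,n]}$: the intersection $[p,q]\cap[m,n]$ is again an interval whose left endpoint $\max(p,m)\ge p$, so every window occurring in $\|R_{[m,n]}z\|_{(N_k)}$ already occurs in $\|z\|_{(N_k)}$. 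Density of $c_{00}$ then yields the basis.

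\textbf{Reduction and weak nullity.} To prove shrinking I would argue by contradiction using Lemma \ref{L:w}: suppose $(y_j)$ is a normalized block sequence of $(z_i)$ that is $\alpha$-$\ell_1^+$, with $y_j$ supported on $I_j=[p_j,q_j]$ and $p_j\to\infty$. Set $w_j:=Sy_j$, so $\|w_j\|\le 1$. The key first step extracts decay from the second part of the norm: since $\|y_j\|_{k'}\le 1$ and $S_{[m,n]}y_j=w_j$ once $[m,n]\supseteq I_j$, taking $k'(j):=\max\{k:N_k\le p_j\}$ gives
\[
\|P_{[m_0,n_0]}w_j\|\le 2^{-k'(j)}\qquad\text{for all }m_0\le n_0\le k'(j),
\]
with $k'(j)\to\infty$; in particular $f_i(w_j)\to 0$ for each fixed $i$. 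Combining this with the Carando--Lassalle criterion \eqref{reconstruct} (valid since the frame is shrinking), I would split $f(w_j)=f(P_{[1,n]}w_j)+f(P_{[n+1,\infty)}w_j)$ for each $f\in X^*$: the tail is uniformly small by \eqref{reconstruct} and the head, a finite sum of terms $f_i(w_j)f(x_i)$, tends to $0$. Hence $w_j\to 0$ weakly in $X$.

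\textbf{The contradiction.} I would then refute the $\ell_1^+$ property not via the constant-coefficient sum but by \emph{choosing} nonnegative coefficients $(a_j)$ with $\sum_j a_j=1$ and $\|\sum_j a_j y_j\|_{(N_k)}<\alpha$, obtained by spreading the weight thinly over a long, far-out window of blocks. Writing the norm as $A(\cdot)\vee B(\cdot)$ for the two terms of \eqref{E:def}, the second term is controlled because each fully enclosed block contributes $2^{k'}a_j\|P_{[m_0,n_0]}w_j\|\le a_j\,2^{k'-k'(j)}$, so spreading $(a_j)$ across many distinct levels $k'(j)$ turns $\sup_{k'}2^{k'}\sum_j a_j 2^{-k'(j)}$ into a maximal-function average $<\alpha/2$, the at most two boundary blocks contributing $\le 2\max_j a_j$. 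For the first term $A=\|\cdot\|_{\min}$, the boundary pieces are again $\le 2\max_j a_j$, while the enclosed pieces reduce to $\sup_{a\le b}\|\sum_{j=a}^b a_j w_j\|$, which a repeated-averaging (iterated Mazur) selection makes $<\alpha/2$ using weak nullity of $(w_j)$. A coefficient vector meeting both requirements gives $\|\sum_j a_j y_j\|_{(N_k)}<\alpha=\alpha\sum_j a_j$, the desired contradiction.

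\textbf{Main obstacle.} The hard part is exactly this simultaneous choice of coefficients. Controlling $B$ pushes the weight to be spread across the scales $(N_k)$, whereas controlling $A$ demands a convex combination of the weakly null $(w_j)$ all of whose consecutive sub-sums are small, and the Cesàro average is \emph{not} enough, as the Schreier space already illustrates. The technical core is thus the repeated-averaging construction that renders every interval restriction of a convex block of a weakly null sequence small, carried out inside a window long enough to also spread across the levels $k'(j)$; the decay $\|P_{[m_0,n_0]}w_j\|\le 2^{-k'(j)}$ furnished by \eqref{killstuff} is what makes the two demands compatible.
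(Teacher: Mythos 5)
Your first part (boundedness of $S$ and $T$, bimonotonicity of $(z_i)$) is correct and coincides with the paper's argument, and your extraction of the decay $\|P_{[m_0,n_0]}Sy_j\|\le 2^{-k'(j)}$ from the second term of \eqref{E:def}, together with the deduction that $(Sy_j)$ is weakly null via the Carando--Lassalle criterion, also matches steps in the paper's proof. But for the shrinking part you then diverge: you argue by contradiction against the $\alpha$-$\ell_1^+$ alternative of Lemma \ref{L:w} by exhibiting a convex combination $\sum a_j y_j$ of small norm, whereas the paper proves directly that every normalized block sequence $(y_j)$ has a weakly null subsequence via a dichotomy: either $\|Sy_j\|\to 0$ along a subsequence, in which case the estimate $\|\cdot\|_{k}\le 2\sup|a_i|$ (the paper's \eqref{upperc0}) shows $(y_j)$ is dominated by the unit vector basis of $c_0$; or $\|Sy_j\|$ is bounded below, in which case $(y_j)$ is shown to be \emph{equivalent} to the weakly null sequence $(Sy_j)$. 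No convex combinations are needed anywhere in the paper's route.

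The divergence matters because your route has a genuine gap exactly at the step you flag as the technical core. To control the first term of \eqref{E:def} you need a convex combination of the weakly null sequence $(w_j)=(Sy_j)$ \emph{all of whose interval sub-sums} $\|\sum_{j=a}^b a_j w_j\|$ are small. This is a true statement, but it is a deep one: it is essentially the convex unconditionality theorem of Argyros, Mercourakis and Tsarpalias (equivalently, the repeated averages hierarchy over the Schreier families), proved with infinite Ramsey theory. The mechanism you name, an ``iterated Mazur'' selection, does not deliver it: if $u_i=\sum_{j\in F_i}b_jw_j$ are successive convex blocks with $\|u_i\|$ small and you average again, an interval of indices can cut some $F_i$ in the middle, and the norm of the resulting \emph{partial} block $\sum_{j\in F_i\cap I}b_jw_j$ is not controlled by $\|u_i\|$; making those partial sums small is precisely the original problem one level down, so the iteration never closes (this is exactly what the higher Schreier spaces $S_\xi$ witness). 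As written, the proposal therefore rests on an unproved and substantially harder result than the theorem itself. If you want to salvage your route you would have to import the Argyros--Mercourakis--Tsarpalias theorem explicitly; but the paper's dichotomy argument (Corollary \ref{dichotomy}) shows this can be avoided entirely, at the cost only of the elementary estimates (iii)--(v) and \eqref{upperc0}.
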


\begin{proof}

Assuming $(N_k)_{k=1}^\infty$ satisfies Proposition \ref{P:N_k}, we will first show that $Z_{(N_k)}$ is as an associated space to the frame $(x_i,f_i)_{i=1}^\infty$. This is the only place in the proof we use Proposition \ref{P:N_k}. Let us see that the analysis operator $T:X \to Z_{(N_k)}$ satisfies $\|T\|\leq C$ where $C:=\sup_{m\leqslant n}\|P_{[m,n]}\|$ is the frame constant of $(x_i,f_i)$. Let $x\in X$.  Then, the first part of $\|T x\|_{(N_k)}=\|\sum f_i(x)z_i\|_{(N_k)}$ in \eqref{E:def} satisfies 
$$\sup_{m\leq n}\Big\|\sum_{i=m}^n f_i(x)x_i\Big\|=\sup_{m\leq n}\|P_{[m,n]}x\|\leq C\|x\|.$$
We now fix $k\in \mathbb{N}$, $m_0\leqslant n_0\leqslant k$, and $N_k \leqslant m\leqslant n$. Then by Proposition \ref{P:N_k} we have  that the second part of  \eqref{E:def} satisfies 
$$2^k\Big\|P_{[m_0,n_0]}S_{[m,n]}\sum f_i(x)z_i\Big\|=2^k\Big\|P_{[m_0,n_0]}P_{[m,n]}\sum f_i(x)x_i\Big\|\leqslant \|x\|. $$
Thus, we have that $\|T x\|_{(N_k)}\leq C\|x\|$ and hence $\|T\|\leq C$.
The synthesis operator $S:Z_{(N_k)}\to X$ is bounded, since it is bounded on $S:Z_{\min}\to X$ and $\|z\|_{\min}\leqslant \|z\|_{(N_k)}$.  Thus, $Z_{(N_k)}$ is an associated space to $(x_i,f_i)_{i=1}^\infty$.

 Let $(y_i)$ be a normalized block sequence of $(z_i)$ in $Z_{(N_k)}$. In order to show that $(z_i)$ is shrinking it suffices to show by Lemma \ref{L:w} that there is a subsequence of $(y_i)$ which is weakly null. We claim that we may pass to a subsequence of $(y_i)$ and find an increasing sequence $(k_i)$ in $\mathbb{N}$ so that for all $i\in\N$ we have that
\begin{itemize}
    \item[(i)] $y_i \in \Span_{N_{k_i}\leqslant j\leqslant k_{i+1}} (z_j)$,
    \item[(ii)] $\displaystyle \|P_{[m,n]}Sy_i\| \leqslant 2^{-k_i}$ for all $m,n\in\N$ with  $k_{i+1} \leqslant m \leqslant n$.
\end{itemize}
Indeed, (i) is easily obtained as $(y_i)$ is a block sequence of $(z_i)$.  We may obtain (ii) by choosing $k_{i+1}$ sufficiently large as $\sum_{j\in\N} f_j (Sy_i)x_j$ is convergent for all $i\in\N$.  The following additional properties are implied by (i) and (ii).
\begin{enumerate}
    \item[(iii)] $\|P_{[m_0,n_0]}S_{[m,n]}y_i\|\leqslant 2^{-k_i}$ for $m_0\leqslant n_0\leqslant k_i$ and $m\leqslant n$,
    \item[(iv)] For each $i \in \mathbb{N}$, $\|P_{[k_i,k_{i+1})} Sy_i\| \geqslant \|Sy_i\|- 2^{-k_i+1}$,
    \item[(v)] For $i \not= j$ in $\mathbb{N}$,  $\|P_{[k_j,k_{j+1})} Sy_i\| \leqslant  2^{-k_i}$. 
\end{enumerate}
Item (iii) follows from (i) and the fact that $\|y_i\|_{k_i}\leqslant \|y_i\|_{(N_k)} = 1$.
Item (v) follows from (iii) if $j<i$ and follows from (ii) if $j>i$. Item (iv) is a consequence of (ii) and (iii) as
$$\|P_{[k_i,k_{i+1})} Sy_i\|\geqslant \|Sy_i\|-\|P_{[1,k_{i})} Sy_i\|-\|P_{[k_{i+1},\infty)} Sy_i\|\geqslant \|Sy_i\|- 2^{-k_i+1}.$$
Before dividing the proof into two cases, we fix $(a_i)\in c_{00}$ and $k\in\N$ and will show that 
\begin{equation} \label{upperc0}
    \big\|\sum a_i y_i\big\|_k=\sup_{\substack{m_0<n_0\leq k\\ N_k \leqslant m<n}} 2^k \big\|P_{[m_0,n_0]} S_{[m,n]}\sum a_i y_i\big\|\leqslant 2 \sup |a_i|.
\end{equation}
Let $m_0\leqslant n_0\leqslant k$ and $N_k \leqslant m\leqslant  n$. Let $i_0$ be the least integer such that $m\leqslant k_{i_0+1}$. By (i), we have that $S_{[m,n]}y_i=0$ for all $i<i_0$.  Since $\|y_{i_0}\|=1$,  \eqref{E:def} implies that $2^k\|P_{[m_0,n_0]}S_{[m,n]} a_{i_0}y_{i_0}\| \leqslant |a_{i_0}|$.
By (iii),
\begin{equation}
\sum_{i=i_0+1}^\infty 2^k\|P_{[m_0,n_0]}S_{[m,n]} a_iy_i\|\leqslant 2^{k} \sum_{i=i_0+1} \frac{1}{2^{k_i}}|a_i| \leqslant \sup|a_i|.  \label{sums}
\end{equation}
Thus, we have that 
\begin{align*}
     2^k \|P_{[m_0,n_0]} S_{[m,n]}\sum a_{i} y_{i}\|&\leqslant 2^k\|P_{[m_0,n_0]} S_{[m,n]} a_{i_0} y_{i_0}\|+\sum_{i=i_0+1}^\infty 2^k\|P_{[m_0,n_0]}S_{[m,n]} a_iy_i\|\\
     &\leqslant 2\sup|a_i|.
\end{align*}
This proves \eqref{upperc0}.  We now pass to a further subsequence of $(y_i)$ such that exactly one of the following holds.
\begin{itemize}
   \item[(vi a.)] $\|S y_i\|\leqslant 2^{-i}$ for each $i \in \mathbb{N}$.
    \item[(vi b.)] For some $c>2^{-k_1+3}$, we have $c\leqslant \|Sy_i\|\leqslant 1$.
\end{itemize}
Assume (vi a.) holds. Fix $(a_i)\in c_{00}$. Let $m\leqslant n$ and let $i_0$ be the least integer such that $m\leqslant \max\supp\,y_{i_0}$ and let $i_1$ be the greatest integer such that $n\geqslant \min\supp y_{i_1}$. We have the following.
\begin{align*}
    \|S_{[m,n]}\sum a_i y_i\|&=\|S_{[m,n]} \sum_{i=i_0}^{i_1} a_i y_i\|\\
    &\leq \|S_{[m,n]} a_{i_0} y_{i_0}\|+\sum_{i=i_0+1}^{i_1-1} \|S a_i y_i\|+\|S_{[m,n]} a_{i_1} y_{i_1}\|\\
    &\leq    |a_{i_0}| + \sum_{i=i_0+1}^{i_1-1} 2^{-i}|a_i|+|a_{i_1}|\hspace{1cm}\textrm{ by (vi),}\\
    &\leq 3\sup_{i\in\N} |a_i|
\end{align*}
Combining this with (\ref{upperc0}) gives that $(y_i)$ is $3$-dominated by the unit vector basis of $c_0$ and therefore $(y_i)$ is weakly null, which completes the proof for this case.

We now assume item (vi b.) holds. We will prove that in this case that $(Sy_i)$ and $(y_i)$ are equivalent basic sequences and that $(Sy_i)$ is weakly null.  Let $C$ be the frame constant of $(x_i,f_i)$.  We first prove that $(Sy_i)$ is $2C$-basic.

Let $(a_i) \in c_{00}$ and let $j\in\N$ be such that $|a_{j}|=\sup |a_i|$. We have that 
\begin{align*}
    C\big\|\sum a_i Sy_i\big\| &  \geqslant \|\sum a_i P_{[k_{j},k_{j+1})} S y_i\| \\
    & \geqslant  \big\|a_j P_{[k_j,k_{j+1})} S y_j\big\| -  \sum_{i\not=j} \big\|a_i P_{[k_j,k_{j+1})} S y_i\big\| \\
    & \geqslant (|a_j|\|S y_i\|-|a_j|2^{-k_j+1}) - \sum_{i\not=j} |a_i|2^{-k_i}\hspace{.5cm}\textrm{ by  (iv), and (v)}\\
    & \geqslant |a_j|c-|a_j|2^{-k_1+2}\hspace{1cm}\textrm{ by  (vi b.) }\\
    &\geqslant c 2^{-1} |a_j|\hspace{2cm}\textrm{ as $c>2^{-k_1+3}$ by  (vi b.). }
\end{align*}
Thus, we have that 
\begin{equation}
    C\|\sum a_i S y_i\|\geqslant c2^{-1} \sup|a_i|. \label{more lower}
\end{equation}
We now fix $M\in\N$ and consider the following partial sum.
\begin{align*}
    \Big\|\sum_{i=1}^M & a_i Sy_i\Big\|  \leqslant \Big\|P_{[1,k_{M+1})}\sum_{i=1}^M a_i Sy_i\Big\| + \Big\|P_{[k_{M+1},\infty)}\sum_{i=1}^M a_i Sy_i\Big\|  \\
    & \leqslant \big\|P_{[1, k_{M+1})}\sum a_i Sy_i\big\| + \Big\|P_{[1,k_{M+1})}\sum_{i=M+1}^\infty a_i Sy_i\Big\|
    +\Big\|P_{[k_{M+1},\infty)}\sum_{i=1}^M a_i Sy_i\Big\|\\
    & \leqslant C \big\|\sum a_i Sy_i\big\| +\sum_{i=M+1}^\infty |a_i|2^{-k_i}+ \sum_{i=1}^M |a_i|2^{-k_i}\hspace{.5cm}\textrm{ by  (iii) and (ii)}\\
    & \leqslant C \big\|\sum a_i Sy_i\big\| + \sup |a_i| 2^{-k_1+1}\\
    & \leqslant C \big\|\sum a_i Sy_i\big\| +  \sup |a_i| c2^{-2}\hspace{1cm}\textrm{ as $c>2^{-k_1+3}$ by  (vi b.)}\\
    &\leqslant (C + 2^{-1}C) \big\|\sum a_i Sy_i\big\|\leqslant 2C \big\|\sum a_i Sy_i\big\|\hspace{1cm}\textrm{ by \eqref{more lower}}.
\end{align*}
This proves that $(Sy_i)$ is $2C$-basic.

Since $S$ is a bounded linear operator,  $(y_i)$  dominates  $(Sy_i)$.  We now prove that $(Sy_i)$ is equivalent to $(y_i)$ by proving that $(Sy_i)$  dominates $(y_i)$.
Fix $(a_i)\in c_{00}$. Let $j_0\in\N$ with $|a_{j_0}|=\max_i |a_i|$ and let $I_{j_0}\subset\mathbb{N}$ be the smallest interval containing $\supp y_{j_0}$. Thus by (vi b.),
\begin{equation}
\sup_{m\leq n}\big\|S_{[m,n]} \sum a_i y_i\big\|\geqslant \big\|S_{I_{j_0}} \sum a_i y_i\big\|=|a_{j}|\|S y_{j}\|\geqslant c\sup|a_i|.\label{at least c0}
\end{equation}
We now have that
\begin{align*}
        \big\|\sum a_i y_i\big\|& =\sup_{m\leq n}\big\|S_{[m,n]} \sum a_i y_i\big\| \vee \sup_k \big\|\sum a_i y_i\big\|_k \\
& \leqslant \sup_{m\leq n}\big\|S_{[m,n]} \sum a_i y_i\big\| \vee 2 \sup |a_i|\hspace{1cm}\textrm{ by \eqref{upperc0}},\\
&  \leqslant \sup_{m\leq n}\big\|S_{[m,n]} \sum a_i y_i\big\| \vee \frac{2}{c} \sup_{m\leq n}\big\|S_{[m,n]} \sum a_i y_i\big\| \hspace{1cm}\textrm{ by \eqref{at least c0}},\\
& =  \frac{2}{c} \sup_{m\leq n}\big\|S_{[m,n]} \sum a_i y_i\big\|
\end{align*}
Therefore, to prove that $(S y_i)_{i=1}^\infty$ dominates $(y_i)_{i=1}^\infty$ it will suffice to prove that for fixed $m<n$ we have that 
\begin{equation}\label{E:dom}
    \big\|S_{[m,n]}\sum a_i y_i\big\| \leqslant 2C(1+2c^{-1}) \big\|\sum a_i S y_i\big\|.
\end{equation}  As, $(y_i)$ is a block sequence of $(z_i)$, there exists $j_1\leqslant j_2$ so that 
$$S_{[m,n]}\sum_i a_i y_i =  \sum_{i=j_1}^{j_2} a_j Sy_j - S_{[1,m)}a_{j_1}y_{j_1} - S_{[n,\infty)}a_{j_2}y_{j_2}  $$
By taking the norm of both sides we now have that
\begin{align*}
    \big\|S_{[m,n]}\sum a_i y_i\big\| &\leqslant  \Big\|\sum_{i=j_1}^{j_2} a_j Sy_j\Big\| + \|S_{[1,m)}a_{j_1}y_{j_1}\|+ \|S_{[n,\infty)}a_{j_2}y_{j_2}\|\\
     &\leqslant  \Big\|\sum_{i=j_1}^{j_2} a_j Sy_j\Big\| + |a_{j_1}|+ |a_{j_2}|\\
    &\leqslant \Big\|\sum_{i=j_1}^{j_2} a_j Sy_j\Big\| + c^{-1}\|a_{j_1} S y_{j_1} \|+ c^{-1}\|a_{j_2} S y_{j_2} \|\\
    &\leqslant 2C(1+2c^{-1}) \big\|\sum a_i S y_i\big\|\hspace{1cm}\textrm{as $(S y_i)$ is $2C$-basic.}
\end{align*}
This proves \eqref{E:dom} and hence $(Sy_i)$ and $(y_i)$ are equivalent basic sequences.  
All that remains is to prove that $(S y_j)$ is weakly null. Let $f\in X^*$ be some functional.
\begin{align*}
\lim_{j\rightarrow\infty} |f(S y_j)|&=  \lim_{n\rightarrow\infty}\lim_{j\rightarrow\infty} |f(P_{[n,\infty)}Sy_j)|\hspace{1cm}\textrm{by (iii),}\\
&\leq \lim_{n\rightarrow\infty}\lim_{j\rightarrow\infty} \|f\circ P_{[n,\infty)}\| \| S y_j\|\\
&=0\hspace{2cm}\textrm{ as the Schauder frame $(x_i,f_i)$ is shrinking.}
\end{align*}
Thus, $(Sy_j)$ is weakly null which implies that $(y_j)$ is weakly null as they are equivalent basic sequences.  Hence, $(z_i)$ is a shrinking basis as every normalized block sequence is weakly null.
\end{proof}

The proof of the above theorem admits the following corollary.

\begin{corollary}
Let $(x_i,f_i)$ be a shrinking frame for a Banach space $X$ and let $(N_k)$ satisfy Proposition \ref{P:N_k}. Let $(y_i)$ be a normalized block sequence  in $Z_{(N_k)}$.
\begin{enumerate}
    \item If there is a subsequence $(y'_i)$ of $(y_i)$ so that
    $Sy'_i\to 0$, then there is a further subsequence of $(y'_i)$ that is equivalent the the unit vector basis of $c_0$.
    \item If there is no subsequence $(y'_i)$ of $(y_i)$ so that 
    $Sy'_i\to0$, then there is a subsequence  $(y'_i)$ of $(y_i)$ so that $(y'_i)$ is equivalent to $(Sy'_i)$.  
\end{enumerate}
\label{dichotomy}
\end{corollary}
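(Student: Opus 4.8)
The plan is to extract both statements directly from the case analysis already carried out in the proof of Theorem~\ref{T:main}; the only real work is to match the two hypotheses of the corollary with the two cases (vi~a.) and (vi~b.) considered there, and to upgrade the $c_0$-domination obtained in (vi~a.) to a genuine equivalence. I would begin, exactly as in that proof, by passing to a subsequence of $(y_i)$ and choosing an increasing sequence $(k_i)$ so that properties (i)--(v) hold, which in turn yields the uniform upper estimate $\|\sum a_i y_i\|_k \le 2\sup|a_i|$ of \eqref{upperc0}. These properties are inherited by all further subsequences (or can simply be re-derived), so I am free to refine $(y_i)$ as needed below.

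For part~(1), suppose $Sy'_i \to 0$ along a subsequence $(y'_i)$. Passing to a further subsequence I may assume $\|Sy'_i\| \le 2^{-i}$, which is precisely case (vi~a.). The computation in that case gives $\|S_{[m,n]}\sum a_i y'_i\| \le 3\sup|a_i|$ for all $m \le n$, and combined with \eqref{upperc0} this shows $\|\sum a_i y'_i\|_{(N_k)} \le 3\sup|a_i|$; that is, $(y'_i)$ is $3$-dominated by the unit vector basis of $c_0$. For the reverse domination I would use only that $(y'_i)$ is a normalized basic sequence: if $K$ is its basis constant then $|a_j| = |a_j|\,\|y'_j\| \le 2K\|\sum_i a_i y'_i\|$ for every $j$, whence $\sup_i|a_i| \le 2K\|\sum a_i y'_i\|$. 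The two inequalities together give that $(y'_i)$ is equivalent to the unit vector basis of $c_0$.

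For part~(2), suppose no subsequence of $(y_i)$ sends $Sy'_i$ to $0$. Then $\liminf_i\|Sy_i\| > 0$, so after passing to a subsequence there is a constant $c > 0$ with $\|Sy_i\| \ge c$ for all $i$; moreover $\|Sy_i\| \le \|y_i\|_{(N_k)} = 1$ automatically, since the first term of \eqref{E:def} dominates $\|Sz\| = \|\sum a_i x_i\|$. When I now extract the sequence $(k_i)$ as above, I choose $k_1$ large enough that $2^{-k_1+3} < c$; this places us exactly in case (vi~b.). The argument given there shows that $(Sy_i)$ is $2C$-basic and that $(y_i)$ and $(Sy_i)$ dominate one another, hence are equivalent, which is the desired conclusion.

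The step requiring the most care is the coordination of quantifiers in part~(2): I must secure the lower bound $c$ on $\|Sy_i\|$ and then choose $k_1$ large relative to $c$. This is legitimate because in the construction of $(k_i)$ the index $k_1$ is only required to be ``sufficiently large'' and is chosen independently of $c$, so the constraint $2^{-k_1+3} < c$ can always be met once $c$ has been fixed first. The only ingredient not already spelled out in the proof of Theorem~\ref{T:main} is the elementary lower $c_0$-bound for normalized basic sequences used in part~(1).
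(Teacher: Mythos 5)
Your proposal is correct and is essentially the paper's own argument: the paper gives no separate proof, stating only that "the proof of the above theorem admits the following corollary," and your write-up is exactly the intended unpacking, matching hypothesis (1) to case (vi a.) and hypothesis (2) to case (vi b.) of the proof of Theorem \ref{T:main}. The two small points you add explicitly --- the lower estimate $\sup_i|a_i|\leqslant 2K\|\sum a_i y'_i\|$ needed to upgrade $c_0$-domination to equivalence, and the freedom to take $k_1$ large (by discarding initial terms of the block sequence) after $c$ has been fixed --- are both valid and are precisely the details the paper leaves implicit.
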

Note that the example given in Section 2 shows that Corollary \ref{dichotomy} is false for the minimal associated $Z_{min}$.  In that example, $(z_{2j-1})_{j=1}^\infty$ is a normalized block sequence in $Z_{min}$ with $\|S z_{2j-1}\|=1$ for all $j\in\N$, but $(z_{2j-1})_{j=1}^\infty$ has no subsequence which is equivalent to a sequence in $X$.

\begin{remark}
Let $X$ have a Schauder frame $(x_i,f_i)$ with associated space $Z$,  analysis operator $T:X \to Z$ and synthesis operator $S: Z \to X$. The following are fundamental properties of Schauder frames.
\begin{enumerate}
    \item $X$ is isomorphic to $TX$, which is a complemented subspace of $Z$.
    \item $TS:Z\rightarrow Z$ is a projection of $Z$ onto $TX$.
    \item $Z/TX$ is isomorphic to the range of $I_Z-TS$, where $I_Z$ is the identity operator on $Z$.
\end{enumerate}
\label{I-TS}
\end{remark}

\begin{proposition}
Let $(x_i,f_i)$ be a shrinking frame for a Banach space $X$ and let $(N_k)$ satisfy Proposition \ref{P:N_k}. Then $Z_{(N_k)}/TX$ is $c_0$ saturated.  That is, every infinite dimensional subspace of $Z_{(N_k)}/TX$ contains a further subspace which is isomorphic to $c_0$.
\end{proposition}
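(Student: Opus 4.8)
The plan is to reduce the statement to a fact about $\ker S$ and then invoke Corollary \ref{dichotomy}, whose proof has already done the real work. First I would identify the quotient concretely. Since $ST = \Id_X$ by the frame reconstruction formula, the operator $TS$ is idempotent, and by Remark \ref{I-TS} it is a bounded projection of $Z_{(N_k)}$ onto $TX$ with $Z_{(N_k)}/TX$ isomorphic to $\ran(I_{Z_{(N_k)}} - TS) = \ker(TS)$. Because $T$ is injective (again from $ST = \Id_X$), the equation $TSz = 0$ forces $Sz = 0$, so $\ker(TS) = \ker S$. Thus $Z_{(N_k)}/TX \cong \ker S$, a closed subspace of $Z_{(N_k)}$. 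Since containing an isomorphic copy of $c_0$ is an isomorphism invariant, it suffices to show that every infinite-dimensional closed subspace $W$ of $\ker S$ contains a subspace isomorphic to $c_0$.

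Next I would extract a block basis inside $W$ by a perturbation argument. As $(z_i)$ is a Schauder basis of $Z_{(N_k)}$ and $W$ is infinite-dimensional, the Bessaga--Pe{\l}czy{\'n}ski selection principle produces a normalized sequence $(w_n)$ in $W$ and a normalized block sequence $(u_n)$ of $(z_i)$ with $\sum_n \|w_n - u_n\|$ as small as we wish; in particular $(w_n)$ and $(u_n)$ are equivalent basic sequences. The crucial point is that $(u_n)$ has vanishing synthesis: because $w_n \in W \subseteq \ker S$ we have $Sw_n = 0$, so $\|Su_n\| = \|S(u_n - w_n)\| \le \|S\|\,\|u_n - w_n\| \to 0$.

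Finally I would apply Corollary \ref{dichotomy}. The sequence $(u_n)$ is a normalized block sequence of $(z_i)$ in $Z_{(N_k)}$ with $Su_n \to 0$, so part (1) of the corollary gives a subsequence $(u_{n_j})$ equivalent to the unit vector basis of $c_0$. Passing to the same subsequence of $(w_n)$ and using the equivalence coming from the perturbation, $(w_{n_j})$ is likewise equivalent to the $c_0$ basis, so its closed linear span is a copy of $c_0$ sitting inside $W$. This yields the desired $c_0$-saturation.

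The only delicate step is the perturbation: I must ensure $\|w_n - u_n\|$ is summably small enough that the two basic sequences are genuinely equivalent and that the equivalence constant does not degenerate upon passing to a subsequence. This is exactly the content of the standard small-perturbation lemma for basic sequences, so no new obstacle appears; the substantive analytic input is entirely encapsulated in Corollary \ref{dichotomy}, and the remainder is bookkeeping built on Remark \ref{I-TS}.
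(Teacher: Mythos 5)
Your proposal is correct and follows essentially the same route as the paper: identify the quotient with the range of $I-TS$ (equivalently, with $\ker S$, since $T$ is injective), perturb a sequence in an infinite-dimensional subspace to a normalized block sequence whose images under $S$ tend to zero, and invoke part (1) of Corollary \ref{dichotomy}. The only cosmetic difference is that you pass through $\ker(TS)=\ker S$ explicitly, whereas the paper deduces $Sy_i\to 0$ from $TSy_i\to 0$ using that $T$ is an isomorphic embedding; both are sound.
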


\begin{proof}
Using Remark \ref{I-TS}, $Z_{(N_k)}/TX$ is isomorphic 
to the range of $I_Z-TS$ in $Z_{(N_k)}$. 
Let $Y$ be an infinite dimensional subspace of the range of $I_Z-TS$.  There exists a normalized block sequence $(y_i)$ in $Z_{(N_k)}$ and a sequence $(w_i)$ in $Y$ so that $\|y_i-w_i\|\to 0$.  After passing to a subsequence, we may assume that $(y_i)$ and $(w_i)$ are equivalent basic sequences.
 As $Y$ is contained in the range of $I_Z-TS$ and $TS$ is a projection operator, we have that $TSw_i=0$ for all $i \in \mathbb{N}$. Therefore $TSy_i\to 0$. Since $T$ is a an isomorphic embedding, we have that $Sy_i \to 0$. Therefore we are in the first alternative of Corollary \ref{dichotomy} and so $(y_i)$ has a subsequence equivalent to the unit vector basis of $c_0$. 
\end{proof}

The above can be compared to the result of Liu-Zheng \cite{LZ-JFAA} in which the authors prove that if $Z_{min}$ is the minimal associated space for a Schauder frame of $X$, then $Z_{min}/X$ contains an isomorphic copy of $c_0$ if and only if $Z_{min}/X$ is infinite dimensional.

\section{Comparing Associated Spaces with Shrinking Bases}\label{S:compare}

Our next result illustrates that associated spaces of the form $Z_{(N_k)}$ are a minimal collection, with respect to domination, among associated spaces with shrinking bases.

\begin{theorem}
Suppose that $(x_j,f_j)$ is a shrinking Schauder frame for $X$ and that $W$ is an associated space of  $(x_j, f_j)$ with a shrinking associated basis $(w_j)$.  Then there exists $(N_k)$ so that, the basis $(z_j)$ of $Z_{(N_k)}$ is dominated by $(w_j)$.\label{dominated by}
\end{theorem}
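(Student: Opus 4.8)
The plan is to dominate each of the two terms in the $Z_{(N_k)}$-norm \eqref{E:def} by a fixed multiple of $\norm{\sum a_i w_i}_W$, choosing the sequence $(N_k)$ to take care of the second term. Write $w=\sum a_i w_i$, let $S_W\colon W\to X$ be the synthesis operator of $(w_j)$ and let $R^W_{[m,n]}$ denote the partial-sum projections associated to the basis $(w_j)$, so that $S_W R^W_{[m,n]}w=\sum_{m\le i\le n}a_i x_i$. The first term of \eqref{E:def} causes no trouble: since the minimal basis is dominated by every associated basis \cite{Liu-Frames}, there is a constant $K_1$ with
\[
\sup_{m\le n}\Big\|\sum_{m\le i\le n}a_i x_i\Big\|=\Big\|\sum a_i z_i\Big\|_{\min}\le K_1\norm{w}_W .
\]

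First I would rewrite the second term using functionals on $W$. For each $j$ set $g_j:=f_j\circ S_W=S_W^*f_j\in W^*$. Since $P_{[m_0,n_0]}y=\sum_{j=m_0}^{n_0}f_j(y)x_j$ and $f_j(S_W R^W_{[m,n]}w)=(g_j\circ R^W_{[m,n]})(w)$, for $m_0\le n_0\le k$ and any $m\le n$ we obtain
\[
\Big\|P_{[m_0,n_0]}\sum_{m\le i\le n}a_i x_i\Big\|
\le\sum_{j=m_0}^{n_0}\norm{g_j\circ R^W_{[m,n]}}_{W^*}\norm{x_j}\,\norm{w}_W
\le k\max_{1\le j\le k}\big(\norm{g_j\circ R^W_{[m,n]}}_{W^*}\norm{x_j}\big)\norm{w}_W .
\]
This is the step that uses that $(w_j)$ is shrinking. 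Indeed, then $(w_j^*)$ is a basis of $W^*$, so each $g_j=\sum_i g_j(w_i)w_i^*$ converges in $W^*$, the window functional is $g_j\circ R^W_{[m,n]}=\sum_{m\le i\le n}g_j(w_i)w_i^*$, and a standard tail estimate for bases gives $\sup_{n\ge m}\norm{g_j\circ R^W_{[m,n]}}_{W^*}\to0$ as $m\to\infty$.

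I would then choose $(N_k)$ increasing so that, for every $k$,
\[
\norm{g_j\circ R^W_{[m,n]}}_{W^*}\le\frac{1}{2^k\,k\,\norm{x_j}}\qquad\text{whenever }1\le j\le k\text{ and }N_k\le m\le n ,
\]
which is possible because for each fixed $j$ the left-hand side tends to $0$ and only the finitely many indices $j\le k$ are relevant. Enlarging $(N_k)$ if necessary, Remark \ref{R:sub} lets us arrange in addition that $(N_k)$ satisfies Proposition \ref{P:N_k} (enlarging $N_k$ only shrinks the suprema above, so the estimate is preserved). Plugging this bound into the previous display yields $2^k\norm{P_{[m_0,n_0]}S_{[m,n]}\sum a_i z_i}\le\norm{w}_W$ for all admissible $k,m_0,n_0,m,n$, so the second term of \eqref{E:def} is at most $\norm{w}_W$. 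Combining with the bound on the first term gives $\norm{\sum a_i z_i}_{(N_k)}\le\max(K_1,1)\norm{\sum a_i w_i}_W$, i.e.\ $(z_j)$ is dominated by $(w_j)$.

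The hard part is precisely the rapidly growing weight $2^k$ in the second term of \eqref{E:def}: the window-functional norms must decay fast enough to absorb both $2^k$ and the factor $k\norm{x_j}$. The shrinking property of $(w_j)$ supplies decay for each fixed $j$, and the fact that for each $k$ only finitely many $j$ enter lets us defeat the weight by taking $N_k$ large depending on $k$; this is the whole content of the argument, the remaining estimates being routine.
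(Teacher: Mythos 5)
Your proof is correct and follows essentially the same route as the paper's: the paper bounds the second term of \eqref{E:def} by applying Remark \ref{finite rankers} to the finite collections $\{P_{[m_0,n_0]}\circ S_W : m_0<n_0\leqslant k\}$, and your argument simply unpacks that remark by writing these finite rank operators as sums of rank-one pieces $x_j\otimes(f_j\circ S_W)$ and invoking the shrinking property of $(w_j)$ on the functionals $f_j\circ S_W$. Your additional observation that $(N_k)$ can be enlarged via Remark \ref{R:sub} to also satisfy Proposition \ref{P:N_k} is a worthwhile point of care that the paper leaves implicit.
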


We isolate the following remark.

\begin{remark}
Let $\mathcal{A}$ be a finite collection of finite rank operators on a Banach space $W$ with a shrinking basis $(w_j)$. Then,
$$\lim_{n\to \infty} \sup_{A\in \mathcal{A}} \|A \circ R_{[n,\infty)}\|=0 \text{ where }R_{[n,\infty)}(\sum a_i w_i)= \sum_{i\geqslant n}a_iw_i.$$
\label{finite rankers}
\end{remark}

\begin{proof}[Proof of Theorem \ref{dominated by}]
Fix $(x_i,f_i)$, $(w_j)$ and $W$ as in the statement of the theorem and $R_{[n,\infty)}$ as in Remark \ref{finite rankers}.  Since $(w_i)$ dominates the minimal associated basis of $(x_i,f_i)$ there exists  $K\geq 1$ so that 
\begin{equation}
    \sup_{m\leqslant n}\|\sum_{i=m}^n a_i x_i\|\leqslant K\|\sum a_i w_i\|\hspace{1cm}\textrm{ for all }(a_i)\in c_{00}.
\end{equation}
 Let $S_W:W\to X$ be the synthesis operator. For each $k\in\N$, the set $\{P_{[m_0,n_0]}\circ S_W : m_0<n_0\leqslant k\}$
is a finite collection of finite rank operators on $W$. By the previous remark, for each $k\in\N$ there exists $N_k\in\N$ so that $\|(P_{[m_0,n_0]}\circ S_W)\circ R_{[m,n]}\|<2^{-k}$ for all $m_0<n_0\leqslant k$ and $N_k\leqslant m\leqslant n$.
Thus we have for all $(a_i)\in c_{00}$ that

\begin{align*}
K\Big\|\sum a_i w_i\Big\|&\geqslant \sup_{m\leqslant n}\Big\|\sum_{m\leqslant i\leqslant n} a_i x_i\Big\| \vee \sup_{\substack{m_0\leqslant n_0\leq k\\ N_k \leqslant m\leqslant n}} 2^k \Big\|P_{[m_0,n_0]}  \sum_{m\leqslant i\leqslant n} a_i x_i\Big\|\\
&=  \Big\|\sum a_i z_i\Big\|_{(N_k)}
\end{align*}
Hence, we have that $(w_i)$ $K$-dominates the basis $(z_i)$ of $Z_{(N_k)}$.
\end{proof}

 Let $(x_j,f_j)$ be a shrinking Schauder frame and let $(N_j)$ be an increasing sequence of natural numbers which satisfies Proposition \ref{P:N_k}. If $(k_j)$ is an increasing subsequence of natural numbers then we denote $Z_{(k_j),(N_j)}$ to be the Banach space with basis $(z^{(k_j)}_i)_{i=1}^\infty$ which is the completion of $c_{00}$ under the norm:
 \begin{equation}
\big\|\sum a_i z^{(k_j)}_i\big\|_{(k_j),(N_j)}= \sup_{m\leq n}\big\|\sum_{m\leqslant i \leqslant n} a_i x_i\big\| \vee \sup_{j} \big\|\sum a_i z_i\big\|_{k_j}.
\end{equation}
Recall that for $z\in Z_{(N_j)}$ and $j\in \N$,
\begin{equation}
    \|z\|_{k_j} := \sup_{\substack{m_0\leqslant n_0\leq k_j\\ N_{k_j} \leqslant m\leqslant n}} 2^{k_j} \big\|P_{[m_0,n_0]} S_{[m,n]}z\big\|.
\end{equation}
It follows from Remark \ref{R:sub} and Theorem \ref{main} that $Z_{(k_j),(N_j)}$ is an associated space of $(x_i,f_i)_{i=1}^\infty$ and that $(z^{(k_j)}_i)_{i=1}^\infty$ is a shrinking basis for $Z_{(k_j),(N_j)}$.

\begin{theorem} \label{lots of them}
Suppose that $(x_j,f_j)$ is a shrinking Schauder frame for a Banach space $X$ so that the minimal associated basis is not shrinking.  Then for any sequence $(N_k)$ satisfying Proposition \ref{P:N_k}, there exists an increasing sequence $(k_i)_{i=1}^\infty$ so that for all infinite subsets $L,M\subset \mathbb{N}$, the following are equivalent. 
\begin{enumerate}
    \item $(z^{(k_i)_{i \in M}}_j)$ dominates $(z^{(k_i)_{i \in L}}_j)$.
    \item $L\setminus M$ is finite.
\end{enumerate}
\end{theorem}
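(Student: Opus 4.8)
The plan is to prove the two implications separately, treating $(2)\Rightarrow(1)$ as routine and $(1)\Rightarrow(2)$, in contrapositive form, as the substantive direction. Throughout let $C$ denote the frame constant and record the elementary estimate
$\|z\|_{k}\leqslant 2^{k}C\|z\|_{\min}$, valid for every $z$ and every $k$, which follows immediately from $\|P_{[m_0,n_0]}\|\leqslant C$ and $\sup_{m\leqslant n}\|S_{[m,n]}z\|=\|z\|_{\min}$. I also note the monotonicity $\|z\|_{L}\leqslant\|z\|_{M}$ whenever $L\subseteq M$, since the second part of the norm is a supremum over the indices in $L$ (resp.\ $M$). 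For $(2)\Rightarrow(1)$ I would assume $L\setminus M=\{i_1,\dots,i_p\}$ is finite and split $\|z\|_{L}=\|z\|_{\min}\vee\sup_{i\in L}\|z\|_{k_i}$: the indices in $L\cap M$ contribute at most $\sup_{i\in M}\|z\|_{k_i}\leqslant\|z\|_{M}$, while for each of the finitely many $i\in L\setminus M$ the estimate above gives $\|z\|_{k_i}\leqslant 2^{k_i}C\|z\|_{\min}\leqslant 2^{k_i}C\|z\|_{M}$. Hence $\|z\|_{L}\leqslant(\max_{s}2^{k_{i_s}})C\,\|z\|_{M}$, the desired domination.

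For $(1)\Rightarrow(2)$ I prove the contrapositive, so it suffices to build $(k_i)$, \emph{independently of $L,M$}, so that whenever $L\setminus M$ is infinite the basis $(z^{(k_i)_{i\in M}}_j)$ fails to dominate $(z^{(k_i)_{i\in L}}_j)$. Since the minimal basis is not shrinking, Lemma \ref{L:w} supplies a normalized block sequence $(u_\ell)$ of $(z_j)$ in $Z_{\min}$ that is $\alpha$-$\ell_1^{+}$. The crux of the argument is that the levels of the $u_\ell$ must be unbounded: if $\sup_\ell\sup_k\|u_\ell\|_{k}=:B$ were finite, then $\|u_\ell\|_{(N_k)}\in[1,B']$ with $B'=\max(1,B)$, while $\|\sum a_\ell u_\ell\|_{(N_k)}\geqslant\|\sum a_\ell u_\ell\|_{\min}\geqslant\alpha\sum a_\ell$ for nonnegative $(a_\ell)$; thus $(u_\ell/\|u_\ell\|_{(N_k)})$ would be a normalized $\tfrac{\alpha}{B'}$-$\ell_1^{+}$ block sequence in $Z_{(N_k)}$, contradicting Lemma \ref{L:w} together with the fact (Theorem \ref{main}) that $Z_{(N_k)}$ has a shrinking basis. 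Passing to a subsequence I may therefore assume $\|u_\ell\|_{k^*(\ell)}\to\infty$, where $k^*(\ell)$ realizes $\sup_k\|u_\ell\|_{k}$; the bound $\|u_\ell\|_{k^*(\ell)}\leqslant 2^{k^*(\ell)}C$ forces $k^*(\ell)\to\infty$ as well. Replacing $u_\ell$ by its tail $v_\ell:=R_{[N_{k^*(\ell)},\infty)}u_\ell$ leaves the level $\|v_\ell\|_{k^*(\ell)}=\|u_\ell\|_{k^*(\ell)}$ unchanged, keeps $\|v_\ell\|_{\min}\leqslant K_0$ bounded (by the basis constant of $Z_{\min}$), and arranges $\min\supp v_\ell\geqslant N_{k^*(\ell)}$.

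Now I select $(k_i)$ and test vectors recursively. Having chosen $k_{i-1}$ and $\tilde v_{i-1}$, I pick $\ell(i)$ so large that $\|v_{\ell(i)}\|_{k^*(\ell(i))}\geqslant 2^{k_{i-1}}i$, that $k^*(\ell(i))>k_{i-1}$, and that $N_{k^*(\ell(i))}$ exceeds the finite support of $\tilde v_{i-1}$; all three hold for large $\ell$ because $\|v_\ell\|_{k^*(\ell)}\to\infty$ and $k^*(\ell)\to\infty$. Setting $k_i:=k^*(\ell(i))$ and $\tilde v_i:=v_{\ell(i)}$ yields $\|\tilde v_i\|_{k_i}\geqslant 2^{k_{i-1}}i$, $\|\tilde v_i\|_{\min}\leqslant K_0$, and $\|\tilde v_i\|_{k_{i'}}=0$ for all $i'>i$ (since $\supp\tilde v_i$ lies below $N_{k_{i+1}}\leqslant N_{k_{i'}}$, so $S_{[m,n]}\tilde v_i=0$ for $m\geqslant N_{k_{i'}}$). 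Fix $L,M$ with $L\setminus M$ infinite and take $i\in L\setminus M$. Then $\|\tilde v_i\|_{L}\geqslant\|\tilde v_i\|_{k_i}\geqslant 2^{k_{i-1}}i$, whereas every level in $M$ that can contribute to $\|\tilde v_i\|_{M}$ has index $i'<i$ (those with $i'>i$ vanish), so the general bound gives $\|\tilde v_i\|_{k_{i'}}\leqslant 2^{k_{i'}}CK_0\leqslant 2^{k_{i-1}}CK_0$ and hence $\|\tilde v_i\|_{M}\leqslant 2^{k_{i-1}}CK_0$. Therefore $\|\tilde v_i\|_{L}/\|\tilde v_i\|_{M}\geqslant i/(CK_0)$, which is unbounded as $i$ ranges over the infinite set $L\setminus M$, so no domination constant exists.

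The main obstacle is the existence of test vectors with arbitrarily large level-norms but bounded minimal norm. I expect to handle this not by a direct analysis of how far tails ``leak'' into early coordinates, but through the soft dichotomy above: shrinking of $Z_{(N_k)}$ forbids normalized $\ell_1^{+}$ block sequences, yet $Z_{\min}$ contains one, so the levels are precisely the mechanism by which the two norms diverge, and this divergence can be concentrated on tails. Everything else — nesting the supports so that higher levels vanish, and tuning the growth $\|\tilde v_i\|_{k_i}\gtrsim 2^{k_{i-1}}i$ against the crude upper bound $2^{k_{i-1}}CK_0$ for the $M$-levels — is routine bookkeeping once $(k_i)$ is built recursively.
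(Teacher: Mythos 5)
Your proof is correct and follows essentially the same route as the paper's: both extract an $\alpha$-$\ell_1^{+}$ block sequence from the non-shrinking minimal basis, use the shrinkingness of $Z_{(N_k)}$ to force its level-norms $\sup_k\|\cdot\|_k$ to blow up, and then interlace supports with the levels $k_i$ so that whether $i$ lies in the index set controls whether the blow-up at level $k_i$ is visible, yielding unbounded ratios when $L\setminus M$ is infinite. Your tail-truncation $v_\ell=R_{[N_{k^*(\ell)},\infty)}u_\ell$ and the crude bound $\|z\|_{k}\leqslant 2^{k}C\|z\|_{\min}$ applied to the finitely many indices of $L\setminus M$ are minor (and arguably cleaner) variants of the paper's support-interlacing condition and tail-domination argument.
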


Before proving Theorem \ref{lots of them} we state and prove the following corollary.

\begin{corollary}
Let $(x_j,f_j)$ be a shrinking Schauder frame so that the minimal associated basis is not shrinking. Then for each  shrinking associated basis $(w_j)$ there are increasing sequences of natural numbers $(k_i)$ and $(N_i)$ and a set of increasing sequences of natural numbers $(M_\alpha)_{\alpha\in\Delta}$ with $\Delta$ having cardinality the continuum so that
\begin{enumerate}
    \item For each $\alpha \in \Delta$, the basis 
    $(z^{(k_i)_{i\in M_\alpha}}_j)$ of $Z_{(k_i)_{i\in M_\alpha},(N_i)}$ is a shrinking associated basis of $(x_j,f_j)$ which is dominated by $(w_j)$.
    \item For $\alpha \not= \beta$ in $\Delta$, the bases $(z^{(k_i)_{i\in M_\alpha}}_j)$ and $(z^{(k_i)_{i\in M_\beta}}_j)$ are incomparable.
\end{enumerate}
\end{corollary}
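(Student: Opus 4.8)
The plan is to deduce the corollary from Theorem \ref{dominated by} and Theorem \ref{lots of them} together with a standard almost disjoint family, so that no new analytic work is needed. First I would apply Theorem \ref{dominated by} to the given shrinking associated basis $(w_j)$ to produce an increasing sequence $(N_i)$ of natural numbers (satisfying Proposition \ref{P:N_k}) such that the basis $(z_i)$ of $Z_{(N_i)}$ is $K$-dominated by $(w_j)$ for some $K\geq 1$. Since the minimal associated basis is assumed not shrinking, I may then feed this particular $(N_i)$ into Theorem \ref{lots of them} to obtain an increasing sequence $(k_i)_{i=1}^\infty$ with the property that, for all infinite $L,M\subseteq\mathbb{N}$, the basis $(z^{(k_i)_{i\in M}}_j)$ dominates $(z^{(k_i)_{i\in L}}_j)$ if and only if $L\setminus M$ is finite.

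Next I would record the elementary monotonicity of the norms involved. For an infinite set $M\subseteq\mathbb{N}$, the quantity $\sup_{i\in M}\|\cdot\|_{k_i}$ appearing in the definition of $\|\cdot\|_{(k_i)_{i\in M},(N_i)}$ is a supremum over a subfamily of $\{\|\cdot\|_k : k\in\mathbb{N}\}$, so $\|\cdot\|_{(k_i)_{i\in M},(N_i)}\leqslant\|\cdot\|_{(N_i)}$ on $c_{00}$. Consequently the basis $(z_i)$ of $Z_{(N_i)}$ dominates $(z^{(k_i)_{i\in M}}_j)$ for every infinite $M$. Combining this with the previous paragraph and the transitivity of domination shows that $(w_j)$ dominates $(z^{(k_i)_{i\in M}}_j)$ for every infinite $M$; and by the remark preceding Theorem \ref{lots of them} (which invokes Remark \ref{R:sub} and Theorem \ref{main}), each such $(z^{(k_i)_{i\in M}}_j)$ is a shrinking associated basis of $(x_j,f_j)$.

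Finally I would fix an almost disjoint family $(M_\alpha)_{\alpha\in\Delta}$ of infinite subsets of $\mathbb{N}$ with $\Delta$ of cardinality the continuum; such a family exists by a standard construction. Item (1) is then immediate from the previous paragraph applied to each $M_\alpha$. For item (2), if $\alpha\neq\beta$ then $M_\alpha\cap M_\beta$ is finite, whence both $M_\alpha\setminus M_\beta$ and $M_\beta\setminus M_\alpha$ are infinite; applying Theorem \ref{lots of them} in each direction shows that neither $(z^{(k_i)_{i\in M_\alpha}}_j)$ nor $(z^{(k_i)_{i\in M_\beta}}_j)$ dominates the other, i.e.\ they are incomparable.

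Since all the analytic content is carried by Theorems \ref{dominated by} and \ref{lots of them}, there is no serious obstacle here: the corollary is essentially a packaging argument. The only points requiring a little care are the chaining of dominations---verifying that restricting the defining suprema to an index subset can only shrink the norm, so that $Z_{(N_i)}$ sits above every $Z_{(k_i)_{i\in M},(N_i)}$ in the domination order---and the observation that an almost disjoint family of size the continuum is exactly what converts almost disjointness into the pairwise infinite symmetric differences needed to invoke the dichotomy of Theorem \ref{lots of them}.
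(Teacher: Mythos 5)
Your proposal is correct and follows essentially the same route as the paper's proof: apply Theorem \ref{dominated by} to get $(N_i)$, Theorem \ref{lots of them} to get $(k_i)$, observe that each $Z_{(k_i)_{i\in M},(N_i)}$ norm is dominated by the $Z_{(N_i)}$ norm, and use an almost disjoint family of size continuum for incomparability. The only difference is that you spell out the norm monotonicity step explicitly, which the paper leaves implicit.
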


\begin{proof}
Fix a shrinking associated basis $(w_j)$. By Theorem \ref{dominated by}, there is a sequence $(N_i)$ so that the basis $(z_j)$ of $Z_{(N_i)}$ is dominated by $(w_j)$. Let $(k_i)$ be a sequence which satisfies the conclusion of Theorem \ref{lots of them}. For every infinite $M\subseteq\N$, the basis $(z^{(k_i)_{i\in M}}_j)$ of $Z_{(k_i)_{i\in M},(N_i)}$ is a shrinking associated basis of $(x_i,f_i)$ which is dominated by $(z_j)$. Hence, $(z^{(k_i)_{i\in M}}_j)$  is also dominated by $(w_j)$.
 Let  $(M_\alpha)_{\alpha \in \Delta}$ be a collection of infinite subsets of $\mathbb{N}$ with cardinality the continuum so that $M_\alpha \cap M_{\beta}$ is finite for all $\alpha \not= \beta$. This is called a collection of almost disjoint sets and is known to exist.  In particular, $M_\alpha \setminus M_{\beta}$ is infinite for all $\alpha\neq\beta$.  
 By Theorem \ref{lots of them}, $(z^{(k_i)_{i \in M_\alpha}}_j)$ and $(z^{(k_i)_{i \in M_\beta}}_j)$ are incomparable  basic sequences for all $\alpha\neq\beta$. 
\end{proof}

\begin{proof}[Proof of Theorem \ref{lots of them}]
Let $(x_i,f_i)$ be a shrinking Schauder frame for $X$ so that the minimum associated basis is not shrinking. Let $(N_k)$ be an increasing sequence of natural numbers which satisfies Proposition \ref{P:N_k}. By Theorem \ref{T:main} the basis $(z_i)$  of $Z_{(N_k)}$ is a shrinking associated basis  of $(x_i,f_i)$ and, moreover for each sequence $(k_j)$, the basis $(z_i^{(k_j)})$ of $Z_{(k_j),(N_{k_j})}$  is a shrinking associated space of $(x_i,f_i)$. 

As the minimal associated basis of $Z_{\min}$ is not shrinking, there exists a normalized block sequence $(y_n)$ in $Z_{\min}$ which is $\alpha$-$\ell_1^+$ for some $\alpha>0$.  The $Z_{(N_k)}$ norm $1$-dominates the $Z_{\min}$ norm.  Hence, for all non-negative scalars $(a_n)$ we have that 
$$\alpha\sum a_n\leq \big\|\sum a_n y_n\big\|_{Z_{\min}}\leq \big\|\sum a_n y_n\big\|_{(N_i)}.
$$
As $(z_i)$ is a shrinking basis for $Z_{(N_k)}$, every bounded block sequence converges weakly to $0$ and is hence not $\ell_1^+$.   Thus, $(y_n)$ cannot be norm bounded as a sequence in $Z_{(N_i)}$.  After passing to a subsequence, we assume that $\|y_n\|_{(N_i)}\geq 2^n$ for all $n\in\N$.

Let $C>0$ be the frame constant of $(x_i,f_i)$.  Thus, for all $k\in\N$, and $z\in Z_{min}$ we have that $\|z\|_{k}\leq C2^k\|z\|_{Z_{min}}$.  As $\|y_n\|_{Z_{min}}=1$ for all $n\in\N$ and $(y_n)$ is unbounded in $Z_{(N_i)}$, after passing to a subsequence of $(y_n)$ we may assume that there exists a sequence $(k_i)$ so that for all $j\in\N$, $\|y_{j+1}\|_{k_{j+1}}\geq 2^{2k_j}$ and $\supp(y_{t_{j+1}})\subseteq [N_{k_j}, N_{k_{j+2}})$.  We now assume that $L \setminus M$ is infinite and will prove that the basis $(z_j^{(k_i)_{i\in M}})$ does not dominate the basis $(z_j^{(k_i)_{i\in L}})$.  Let $d\in L\setminus M$ with $d>1$.  We have that the following holds
\begin{align*}
    \|y_{d}\|_{(k_i)_{i\in M},(N_i)}&=\|y_{d}\|_{Z_{min}}\vee \sup_{i\in M}\|y_{d}\|_{k_i}\\
     &\leqslant \|y_{d}\|_{Z_{min}}\vee \|y_{d}\|_{k_{d-1}}\hspace{1cm}\textrm{ as }\supp(y_{d})\subseteq [N_{k_{d-1}}, N_{k_{d+1}})\\
    &\leqslant  \|y_{d}\|_{Z_{min}}\vee C2^{k_{d-1}}  \|y_{d}\|_{Z_{min}}\\
    &=C 2^{k_{d-1}}\\
    &\leqslant C 2^{-k_{d-1}}\|y_{d}\|_{(k_i)_{i\in L},(N_i)}\hspace{1cm}\textrm{ as }\|y_{d}\|_{k_{d}}\geq 2^{2k_{d-1}}
\end{align*}
 Thus, the basis $(z_j^{(k_i)_{i\in M}})$ does not dominate the basis $(z_j^{(k_i)_{i\in L}})$ as $L\setminus M$ is infinite.  We now assume that $L\setminus M$ is finite.   Let $m'$ be the least element of $M$ so that $L \cap [m',\infty) \subseteq M $.  Let $z\in \textrm{span}_{i\geq N_{k_{m'}}} z_i$.
 Thus, $\|z\|_{k_i}\leq \|z\|_{k_{m'}}$ for all $i\leq m'$.  We have that
\begin{align*}
    \|z\|_{(k_i)_{i\in L},(N_i)}&=\|z\|_{Z_{min}}\vee \sup_{i\in L}\|z\|_{k_i}\\
    &\leq \|z\|_{Z_{min}}\vee \sup_{i\in M,i\geq m'}\|z\|_{k_i}\\
    &=\|z\|_{(k_i)_{i\in M},(N_i)}
\end{align*}
 Thus, $(z_j^{(k_i)_{i\in M}})_{j=N_{k_{m'}}}^\infty$ $1$-dominates the basis $(z_j^{(k_i)_{i\in L}})_{j=N_{k_{m'}}}^\infty$.  This implies that   $(z_j^{(k_i)_{i\in M}})_{j=1}^\infty$ $K$-dominates $(z_j^{(k_i)_{i\in L}})_{j=1}^\infty$ for some $K\geq 1$.
 \end{proof}

\section{Shrinking Bounded Approximation Property}

A separable Banach space $X$ has the Bounded Approximation Property BAP if there is a sequence $(B_n)$ of finite rank operators on $X$ so that $\lim_n\|x-B_nx\|=0$ for all $x \in X$. The uniform boundedness principle implies that whenever this condition holds there is a $\lambda>0$ with $\|B_n\|\leqslant \lambda$ for each $n$. A space with this property for $\lambda$ is said to have the $\lambda$-AP. Moreover, setting $A_1=B_1$ and $A_n=B_n-B_{n-1}$ for $n>1$ we can replace $B_n$ with $\sum_{i=1}^n A_i$. We note that the {\em definition} of the $\lambda$-AP is that there for each $\varepsilon>0$ and compact set $K$ in $X$ there is finite rank operator $T$ with $\|T\|\leqslant \lambda$ so that $\|x-Tx\|\leqslant \varepsilon$ for all $x \in K$. 

To mirror the definition of shrinking basis, one may wish to define a space $X$ to have the shrinking-BAP if there are finite rank operators $(B_n)$ on $X$ so that $\lim_n\|x-B_nx\|=0$ for all $x \in X$ and $\lim_n\|f-B_n^*f\|=0$ for all $f \in X^*$. That is, the operators in the space approximating the identity also have the property that their dual operator approximate the identity. This is analogous to: A basis $(x_n)$ is shrinking if and only if the coordinate functionals $(x_n^*)$ form a basis for $X^*$. 

The above definition of shrinking-BAP is formally stronger than simply $X^*$ having the BAP and has been isolated before under the name duality-BAP \cite[page 288]{Casazza-Handbook}. The surprising fact that these notions are equivalent  is the content of the following proposition \cite[Proposition 3.5]{Casazza-Handbook}.

\begin{theorem}
A space $X$ has the shrinking BAP if and only if $X^*$ has the BAP. 
\label{equivalent}
\end{theorem}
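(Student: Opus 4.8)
The forward implication is immediate and I would dispose of it first. If $X$ has the shrinking BAP, witnessed by finite rank operators $(B_n)$ on $X$ with $\|x-B_nx\|\to 0$ for all $x$ and $\|f-B_n^*f\|\to 0$ for all $f$, then the adjoints $(B_n^*)$ are finite rank operators on $X^*$ (of the same rank) and the second condition says exactly that $B_n^*\to I_{X^*}$ pointwise in norm, so $X^*$ has the BAP. All the content is in the converse.

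So suppose $X^*$ has the $\lambda$-BAP. The plan is to produce first only the \emph{dual half} of the shrinking BAP, namely finite rank operators $(B_n)$ on $X$, uniformly bounded, with $B_n^*\to I_{X^*}$ pointwise in norm, and then to upgrade to the full statement by a convexity argument. The obstacle in the first step is that a finite rank operator on $X^*$ need not be the adjoint of any operator on $X$; the tool that repairs this is the principle of local reflexivity. Fix a finite set $G\subseteq X^*$ and $\varepsilon>0$, and use the $\lambda$-BAP of $X^*$ to pick a finite rank $C$ on $X^*$ with $\|C\|\le\lambda$ and $\|Cg-g\|<\varepsilon$ for $g\in G$. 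Writing $Cf=\sum_{i=1}^k \xi_i(f)g_i$ with $\xi_i\in X^{**}$ and $g_i\in X^*$, set $E=\Span\{\xi_1,\dots,\xi_k\}\subseteq X^{**}$ and $G'=\Span(G\cup\{g_1,\dots,g_k\})$. The principle of local reflexivity furnishes an operator $u\colon E\to X$ with $\|u\|\le 1+\varepsilon$ and $\langle g,u(\xi)\rangle=\langle \xi,g\rangle$ for all $\xi\in E$, $g\in G'$. Define $B\colon X\to X$ by $Bx=u(C^*Jx)$, where $J\colon X\to X^{**}$ is the canonical embedding; since $C^*$ has range $E$, this is well defined and finite rank, with $\|B\|\le(1+\varepsilon)\|C\|\le(1+\varepsilon)\lambda$. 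The defining property of $u$ gives, for $g\in G'$ and all $x$, that $\langle B^*g,x\rangle=\langle g,u(C^*Jx)\rangle=\langle C^*Jx,g\rangle=\langle Jx,Cg\rangle=\langle Cg,x\rangle$, so $B^*g=Cg$; in particular $\|B^*g-g\|=\|Cg-g\|<\varepsilon$ for $g\in G$.

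Running this construction along a dense sequence $(g_j)$ in $X^*$ (taking $G=\{g_1,\dots,g_n\}$ and $\varepsilon=1/n$ at stage $n$) and using the uniform bound $\|B_n^*\|\le 2\lambda$ yields finite rank $(B_n)$ on $X$ with $B_n^*f\to f$ in norm for every $f\in X^*$. Consequently $\langle f,B_nx\rangle=\langle B_n^*f,x\rangle\to\langle f,x\rangle$, i.e.\ $B_nx\to x$ weakly. To promote this to norm convergence without spoiling the dual convergence I would invoke Mazur's theorem: every tail $\{B_nx:n\ge N\}$ also converges weakly to $x$, so $x$ lies in the norm-closed convex hull of each such tail. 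Enumerating a dense sequence $(x_j)$ in $X$ and applying Mazur in a finite power $X^m$, one selects convex combinations $\tilde B_m=\sum_{n\in I_m}c_{m,n}B_n$ with $\min I_m\to\infty$ and $\|\tilde B_m x_j-x_j\|<1/m$ for $j\le m$; the uniform bound then gives $\tilde B_m x\to x$ in norm for all $x\in X$. Since each $\tilde B_m$ combines only $B_n$ with $n\in I_m$ and $\min I_m\to\infty$, one also has $\|\tilde B_m^*f-f\|\le\max_{n\in I_m}\|B_n^*f-f\|\to 0$ for every $f\in X^*$. Thus $(\tilde B_m)$ witnesses the shrinking BAP of $X$.

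I expect the principle of local reflexivity step to be the main obstacle, as it is precisely what turns the given finite rank operators on $X^*$ into adjoints of finite rank operators on $X$ while keeping the norm bound near $\lambda$; the subsequent Mazur / convex-blocking argument is routine. When $X^*$ is nonseparable one replaces the sequences by nets indexed by the finite subsets of $X^*$ and of $X$, with the identical computations.
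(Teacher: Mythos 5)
Your proof is correct. Note that the paper does not actually prove this statement; it is quoted from \cite[Proposition 3.5]{Casazza-Handbook}, and your argument --- using the principle of local reflexivity to realize the given finite rank operators on $X^*$ as adjoints $B_n^*$ of uniformly bounded finite rank operators on $X$, and then passing to convex combinations of tails via Mazur's theorem to upgrade the weak convergence $B_nx\to x$ to norm convergence while preserving the pointwise norm convergence of the adjoints --- is essentially the standard proof found in that reference. The two key computations (that $C^*Jx$ lands in the finite-dimensional subspace $E$, and that $B^*g=Cg$ on the chosen finite-dimensional subspace of $X^*$) are carried out correctly, and the observation that $\min I_m\to\infty$ is exactly what keeps the dual convergence intact after the convex blocking.
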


In fact a lot more is known: A dual space $X^*$ has the AP if and only if $X$ and $X^*$ have the $1$-AP (i.e. the metric approximation property). Another result related to the current work is the following \cite[Theorem 4.9]{Casazza-Handbook}.

\begin{theorem}
Let $X$ be a Banach space with separable dual. Then $X^*$ has the BAP if and only if $X$ embeds complementably in a Banach space with a shrinking basis. 
\label{Zippincomp}
\end{theorem}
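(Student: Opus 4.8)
The plan is to prove the two implications separately, reserving the frame machinery of the previous sections for the harder direction.

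For the easy implication, suppose $X$ is complemented in a Banach space $W$ carrying a shrinking basis $(w_i)$. Then the biorthogonal functionals $(w_i^*)$ form a basis for $W^*$, so $W^*$ has the BAP. Writing $W = X \oplus Y$ with associated projection $P$ onto the copy of $X$, the adjoint $P^*$ is a projection of $W^*$ whose range $(\ker P)^\perp$ is isomorphic to $X^*$; thus $X^*$ is isomorphic to a complemented subspace of $W^*$. Since the BAP passes to complemented subspaces (compose the approximating finite-rank operators on $W^*$ with $P^*$ and restrict), we conclude that $X^*$ has the BAP. I would dispatch this direction in a few lines.

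For the converse, assume $X$ has separable dual and $X^*$ has the BAP; in particular $X$ is separable, and I may assume it is infinite-dimensional, the finite-dimensional case being trivial. First I would invoke Theorem \ref{equivalent} to upgrade the hypothesis to the statement that $X$ has the shrinking BAP, i.e.\ there are finite-rank operators $B_n$ on $X$ with $\|B_n\| \leqslant \lambda$, $B_n x \to x$ for all $x \in X$, and $B_n^* f \to f$ for all $f \in X^*$. The core step is to run Pe{\l}czy{\'n}ski's construction on $(B_n)$: setting $B_0 = 0$ and $A_n = B_n - B_{n-1}$, I would decompose each finite-rank operator $A_n$ into rank-one pieces of the form $f \otimes x$ and concatenate these across $n$ to produce a sequence of pairs $(x_i, f_i)$ in $X \times X^*$. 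Choosing the rank-one decompositions so that the partial-sum operators remain uniformly comparable to the $B_n$ guarantees that $(x_i,f_i)$ is a Schauder frame for $X$ with finite frame constant whose partial-sum projections $P_{[1,N_n]}$ recover $B_n$ at the block boundaries $N_n$.

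It then remains to check that this frame is shrinking. Because $P_{[1,N_n]}^* f = B_n^* f \to f$ at the block ends, and the within-block increments are controlled by $\|A_n^* f\| = \|B_n^* f - B_{n-1}^* f\| \to 0$, the tails satisfy $\|f \circ P_{[m,\infty)}\| \to 0$ for every $f \in X^*$; by the characterization of Carando and Lassalle quoted in Section \ref{S:main}, $(x_i,f_i)$ is a shrinking Schauder frame. With a shrinking Schauder frame in hand, Theorem \ref{main} produces an associated space $Z_{(N_k)}$ with a shrinking basis $(z_i)$, and Remark \ref{I-TS} identifies $X$ with the complemented subspace $TX$ of $Z_{(N_k)}$. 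Hence $X$ embeds complementably in $Z_{(N_k)}$, which has a shrinking basis, completing the proof. I expect the main obstacle to be the Pe{\l}czy{\'n}ski construction itself: arranging the rank-one decompositions of the $A_n$ so that the partial-sum operators are simultaneously bounded in $X$ and in $X^*$, which is exactly what makes $(x_i,f_i)$ a genuine uniformly bounded frame and, at the same time, delivers the tail condition needed for shrinking. Everything after that is a direct appeal to Theorem \ref{main} and the equivalence in Theorem \ref{equivalent}.
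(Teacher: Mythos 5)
Your proposal is correct and follows essentially the same route as the paper: the forward direction is the paper's Theorem \ref{shrinking-BAP} (upgrade via Theorem \ref{equivalent} to the shrinking BAP, run Pe{\l}czy{\'n}ski's Auerbach-basis decomposition of the $A_n$ into rank-one pieces with the repetition factor making the within-block partial sums small, verify shrinkingness of the resulting frame) followed by Theorem \ref{main} and Remark \ref{I-TS}, while the reverse direction is the same easy complementation argument. The only detail worth tightening is that the within-block error has two parts --- the complete sub-blocks contribute $\tfrac{q}{m_k}\|A_k^* f\|$ and the incomplete one contributes $\tfrac{r}{m_k}\|A_k\|\,\|f\|$ --- so you need both $\|A_k^* f\|\to 0$ and $d_k/m_k\to 0$, exactly as in the paper's estimate.
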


Theorem \ref{Zippincomp} is a complemented version of Zippin's theorem \cite{Zi-TAMS} stating that every Banach space with a separable dual embeds into a space with a shrinking basis. It is also a refinement of the aforementioned theorem of Pe\l czynski and Johnson-Rosethal-Zippin stating that every space with the BAP embeds complementably into a space with a basis. The proof of Theorem \ref{Zippincomp} (as stated \cite{Casazza-Handbook}) follows the results in \cite{JRZ-Israel}. Here the authors show that if $X^*$ has the BAP then $X\oplus C_p$ has a shrinking basis where $C_p$ is the $\ell_p$ sum of finite dimensional spaces $(E_n)$ which are dense (with the Banach Mazur distance) in the space of all finite dimensional spaces. 

We present an alternative proof Theorem \ref{Zippincomp} using the language of frames and modifying the proof of Pe{\l}czy{\'n}ski\cite{Pel-BAP}. The technique we employ for this proof is also used by Mujica and Vieira in \cite{MV-Studia} to give a quantitative improvement of Pe{\l}czy{\'n}ski's theorem. 

\begin{theorem}
Let $X$ be a Banach space. Then $X^*$ has the BAP only if and only if $X$ has a shrinking Schauder frame. \label{shrinking-BAP}
\end{theorem}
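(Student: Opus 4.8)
The plan is to prove both implications of Theorem \ref{shrinking-BAP}, namely that $X^*$ has the BAP if and only if $X$ has a shrinking Schauder frame. The easy direction is the ``if'': suppose $(x_i,f_i)_{i=1}^\infty$ is a shrinking Schauder frame for $X$. Then by definition the partial sum operators $B_n := P_{[1,n]} = \sum_{i=1}^n f_i(\cdot)x_i$ are finite rank, uniformly bounded (by the frame constant), and satisfy $B_n x \to x$ for all $x \in X$. Since the frame is shrinking, $(f_i,x_i)_{i=1}^\infty$ is a Schauder frame for $X^*$, so $B_n^* = \sum_{i=1}^n x_i(\cdot)f_i$ (computed on $X^*$) satisfies $B_n^* f \to f$ for all $f \in X^*$. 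Thus $X$ has the shrinking BAP, and by Theorem \ref{equivalent} this is equivalent to $X^*$ having the BAP.

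The substantive direction is the ``only if'': assuming $X^*$ has the BAP, I would first invoke Theorem \ref{equivalent} to upgrade this to the shrinking BAP, obtaining finite rank operators $(A_n)$ on $X$ with $\sum_{i=1}^n A_i \to \Id_X$ pointwise and $\sum_{i=1}^n A_i^* \to \Id_{X^*}$ pointwise (using the telescoping normalization $A_n = B_n - B_{n-1}$ described in the text, and uniform boundedness to control $\|\sum_{i=1}^n A_i\|$). The goal is to manufacture a shrinking Schauder frame out of these operators. Following the Pe{\l}czy{\'n}ski construction adapted to the frame language, I would write each finite rank operator $A_n$ in the form $A_n = \sum_{j=1}^{d_n} g_{n,j}(\cdot)\, v_{n,j}$ for suitable functionals $g_{n,j} \in X^*$ and vectors $v_{n,j} \in X$, and then relabel the doubly-indexed family $(v_{n,j}, g_{n,j})$ as a single sequence $(x_i,f_i)_{i=1}^\infty$. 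The reconstruction identity $x = \sum_n A_n x = \sum_i f_i(x)x_i$ then shows $(x_i,f_i)$ is a Schauder frame for $X$, once convergence with the correct bracketing is verified.

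The key point, and the main obstacle, is verifying that this frame is \emph{shrinking}, i.e.\ that $(f_i,x_i)$ is a Schauder frame for $X^*$, equivalently (by the Carando--Lassalle criterion quoted above) that $\|f \circ P_{[n,\infty)}\| \to 0$ for every $f \in X^*$. The natural route is to exploit the dual-side convergence $\sum_i x_i(f) f_i = \sum_n A_n^* f \to f$ coming from the shrinking BAP: the tail operators $P_{[n,\infty)}$ on $X$ are dual to the corresponding tail operators on $X^*$, and the pointwise convergence $A_n^* f \to f$ forces uniform smallness of the tails on norm-compact sets. The delicate part is passing from pointwise dual convergence to the uniform estimate $\sup_{\|x\|\le 1}|f(P_{[n,\infty)}x)| \to 0$; this requires that the partial-sum convergence on $X$ be genuinely organized so that $P_{[n,\infty)}^* = (\text{tail of }\sum A_k^*)$, which is where the bracketing of the frame into blocks corresponding to the individual $A_n$'s must be handled carefully so that the frame partial sums $P_{[m,n]}$ interleave correctly with the operator partial sums.

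Finally I would assemble these pieces: the ``if'' direction is immediate from the definition of shrinking frame together with Theorem \ref{equivalent}, while the ``only if'' direction combines Theorem \ref{equivalent} (to obtain the shrinking BAP), the Pe{\l}czy{\'n}ski-style rewriting of $(A_n)$ into a frame, and the Carando--Lassalle characterization to certify that the resulting frame is shrinking. I expect the convergence-of-partial-sums and uniform-tail-estimate bookkeeping to be the only real work; the algebraic construction of the frame from the operators is routine.
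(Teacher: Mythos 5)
Your overall architecture matches the paper's: the ``if'' direction is immediate, and the ``only if'' direction goes through Theorem \ref{equivalent} to get the shrinking BAP and then converts the operators $(A_n)$ into a Schauder frame in the style of Pe{\l}czy{\'n}ski. However, there is a genuine gap at the step you dismiss as routine. Writing each $A_n$ as $\sum_{j=1}^{d_n} g_{n,j}\otimes v_{n,j}$ with only $d_n$ rank-one summands and concatenating does \emph{not} in general produce a Schauder frame: a Schauder frame requires \emph{unbracketed} convergence of $\sum_i f_i(x)x_i$, and the partial sums that stop in the middle of the $n$-th block can have norm as large as $d_n\|A_n\|$ (e.g.\ with an Auerbach basis the partial projections $\sum_{r\le s} e_r^*\otimes e_r$ have norm up to $s$), which is unbounded in $n$. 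So the candidate frame's partial sum operators need not be uniformly bounded and the series need not converge. The same defect blocks your verification of the shrinking condition, which likewise needs uniform control of within-block tails.

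The missing idea is the replication/averaging trick that is the actual content of the paper's display \eqref{E:Pel}: one replaces $A_n$ by $m_n$ consecutive copies of $\frac{1}{m_n}A_n$, each decomposed via an Auerbach basis of $A_n(X)$ into $d_n$ rank-one operators of norm at most $\frac{1}{m_n}\|A_n\|$, and chooses $m_n$ with $d_n/m_n\to 0$. Then every partial sum cutting through a block differs from a block-boundary partial sum by an operator of norm at most $\frac{d_n}{m_n}\|A_n\|\to 0$, which is exactly what makes the unbracketed convergence work on both $X$ and (for the shrinking property) on $X^*$; the paper's final estimate $\|f-(\text{partial sum})\|\le\|f-\sum_{i<k}A_i^*f\|+\frac{q}{m_k}\|A_k^*f\|+\frac{r}{m_k}\|A_k^*\|\,\|f\|$ depends on precisely this structure. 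Without this device your construction fails at the first step (the sequence is not a Schauder frame), so this is the heart of the proof rather than bookkeeping. Your appeal to the Carando--Lassalle criterion is a reasonable alternative way to phrase the shrinking verification, but it too requires the uniform within-block estimates that only the replication trick provides.
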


Note that Theorem \ref{Zippincomp} follows immediately from combining Theorem \ref{shrinking-BAP} with Theorem \ref{main}.

\begin{proof}[Proof of Theorem \ref{shrinking-BAP}]  If $X$ has a shrinking Schauder frame $(x_j,f_j)$ then $(f_j,x_j)$ is a Schauder frame of $X^*$ and hence $X^*$ has the BAP. Before proving the reverse direction  we prove the following finite dimensional result.  Let $X$ be a Banach space and let $A:X\rightarrow X$ be a finite rank operator.  Let $d$ be the rank of $A$ and let $m \in \mathbb{N}$. Then there exists  $(x_j,f_j)_{j=1}^{md}\subseteq X\times X^*$  such that the sequence of rank one operators $(f_j\otimes x_j)_{j=1}^{md}$ satisfies

\begin{equation}
\begin{split}\label{E:Pel}
&\sum_{j=1}^{qd} f_j\otimes x_j\!=\frac{q}{m}\!A\hspace{.5cm}\textrm{ for all $0\leqslant q\leqslant m$,}\\
\Big\|&\sum_{j=qd+1}^{qd+r} f_j\otimes x_j\Big\|\!\leqslant\! \frac{r}{m}\|A\|\hspace{.5cm}\textrm{ for all $0\leqslant q< m$ and $0\leqslant r \leqslant d$.}
\end{split}
\end{equation}
Indeed, let $(e_i)_{i=1}^d$ be an Auerbach basis of $A(X)$ with bi-orthogonal functionals $(e_i^*)_{i=1}^d$.  In particular, $\|e_i^*\otimes e_i\|=1$ for all $1\leq i\leq d$ and $\sum e^*_i\otimes e_i$ is the identity on $A(X)$. For each $1\leq j\leq dm$, we let $x_j=e_r$ and $f_j=m^{-1}A^*e_r^*$ where $j=qd+r$ for some $0\leqslant q< d$ and $1\leqslant r\leqslant m$.  Let $x\in X$ and $0\leqslant q\leqslant m$.  Then,
$$
\sum_{j=1}^{qd} f_j\otimes x_j=q\sum_{r=1}^d \frac{1}{m}(A^*e^*_r)\otimes e_r=\frac{q}{m}\big(\sum_{r=1}^d e^*_r\otimes e_r\big) A=\frac{q}{m}A.
$$

Thus the first part of \eqref{E:Pel} holds.  To prove the second part we let $0\leqslant q<m$, and $1\leqslant r\leqslant m$.  We have that,
$$
\Big\|\sum_{j=qm+1}^{qm+r} f_j\otimes x_j\Big\|
= \Big\|\sum_{j=1}^{r}  \frac{1}{m}(A^*e^*_j)\otimes e_j\Big\|
\leqslant  \sum_{j=1}^r \frac{1}{m} \|e^*_j\otimes e_j\|\|A\|=\frac{r}{m}\|A\|
$$
Thus we have proven \eqref{E:Pel}.  We now assume that  $X^*$ has the BAP and hence by Theorem \ref{equivalent} we have that $X$ has the shrinking-BAP. Let $(A_k)$ be a sequence of finite rank operators so that $x= \sum_{k=1}^{\infty} A_kx$ for all $x \in X$ and that for each $f \in X$ we have that $\lim _n\|f\circ(I-\sum_{k=1}^nA_k)\|=0$ where $I$ is the identity operator on $X$.  Let $d_k$ be the dimension of $A_k(X)$ and let $(m_k)_{k=1}^\infty$ be a sequence of natural numbers with $d_k/m_k \to 0$.

 As described above, we construct for all $k\in\N$ a finite sequence  $(x^k_i,f^k_i)_{i=1}^{d_k m_k}$ which satisfies \eqref{E:Pel} for the finite rank operator $A_k$.   We claim that the infinite sequence  $(x_i^k, f_i^k)_{k\in\N,1\leq i\leq d_km_k}$ is a shrinking Schauder frame of $X$ when enumerated in the natural way.  We will prove that  $(x_i^k, f_i^k)_{k\in\N,1\leq i\leq d_km_k}$ is shrinking, and we note that the proof that $(x_i^k, f_i^k)_{k\in\N,1\leq i\leq d_km_k}$ is a Schauder frame follows the same argument.  Let $f\in X^*$ and $\varepsilon >0$. Choose $K\in\N$ large enough so that for all $k \geqslant K$ we have $\frac{d_k}{m_k}\|A_k\| <\varepsilon$ and $\|f-\sum_{i=1}^{k} A_i^*f\|<\varepsilon$. Thus we also have that $\|A^*_k f\|<2\varepsilon$. 
  Let $l\geq \sum_{j=1}^{K} d_j m_j$.  Then $l=\sum_{j=1}^{k-1} d_j m_j+qd_k+r$ for some $0\leqslant q < m_k$, $1\leqslant r\leqslant d_k$, and  $k> K$. 
 The approximation of $f$ by the partial sum consisting of the first $l$ terms of $\sum f(x_j^i)f_j^i$ satisfies
\begin{align*}
 \Big\|f-\Big(\sum_{i=1}^{k-1}&\sum_{j=1}^{d_i m_i}f(x_j^i)f_j^i +\sum_{j=1}^{q d_k} f(x_j^{k})f_j^{k} +\sum_{j=1}^{r} f(x_j^{k})f_j^{k}\Big)\Big\|\\
 &=\Big\|\Big(f- \sum_{i=1}^{k-1}A^*_i f\Big)-\frac{q}{m_k} A^*_k f -\sum_{j=1}^{r} f_j^{k}(f)x_j^{k} \Big\|\hspace{.5cm}\textrm{ by \eqref{E:Pel},}\\
 &\leq \Big\|f-\sum_{i=1}^{k-1} A^*_i f\Big\|+\frac{q}{m_{k}}\|A_k^* f\|+ \frac{r}{m_{k}}\|A^*_k\| \|f\|<4\varepsilon.
 \end{align*}
 
 Thus we have that $(f_i^k, x_i^k)_{k\in\N,1\leq i\leq d_km_k}$ is a Schauder frame of $X^*$ and hence $(x_i^k, f_i^k)_{k\in\N,1\leq i\leq d_km_k}$ is shrinking.
\end{proof}

\bibliographystyle{abbrv}
\bibliography{bib_source.bib}

\end{document}